\numberwithin{equation}{section} \pagestyle{plain}
\newtheorem{theorem}{Theorem}[section]
\newtheorem{corollary}{Corollary}[section]
\newtheorem{proposition}{Proposition}[section]
\newtheorem{definition}{Definition}[section]
\newtheorem{remark}{Remark}
\newtheorem{example}{Example}
\def\XX{\boldsymbol{X}}
\def\xx{\boldsymbol{x}}
\def\EE{\boldsymbol{E}}
\def\gg{\boldsymbol{g}}
\def\ee{\boldsymbol{e}}
\def\GGG{\mathcal{G}}
\def\EEE{\mathcal{E}_d}
\def\SSS{\mathcal{S}_d}
\def\YY{\boldsymbol{Y}}
\def\pp{\boldsymbol{p}}
\def\dd{\boldsymbol{d}}
\def\fff{\boldsymbol{f}^{\chi}}
\def\mmu{\boldsymbol{\mu}}
\def\RR{\mathbb{R}}
\def\NNN{\boldsymbol{N}}
\def\ff{\boldsymbol{f}}
\def\DDD{\mathcal{D}_d}
\DeclareMathOperator\expval{E}
\DeclareMathOperator{\rank}{rank}
\begin{document}

\author{ R.FONTANA\footnote{%
Corresponding author: Roberto Fontana
Department of Mathematical Sciences G. Lagrange, Politecnico di
Torino. Email: roberto.fontana@polito.it}   \\ \textit{\ %EndAName
Department of Mathematical Sciences G. Lagrange,} \\ {Politecnico di
Torino.}
\\ P. SEMERARO\\ \textit{\ %EndAName
Department of Mathematical Sciences G. Lagrange,} \\ {Politecnico di
Torino.}}
\title{Exchangeable Bernoulli distributions:\\
 high dimensional simulation, estimate and testing}
\maketitle

\begin{abstract}

We explore the class of  exchangeable Bernoulli distributions building on their geometrical structure. 
Exchangeable Bernoulli probability mass functions are points in a convex polytope and we have found analytical expressions for their  extremal generators.  The geometrical structure  turns out to be crucial to simulate high dimensional and negatively correlated binary data. Furthermore, for a wide class of  statistical indices and measures of  a probability mass function we are able to find not only their sharp bounds  in the class, but  also   their distribution across the class.
 Estimate and testing are also addressed.

\noindent \textbf{Keywords}: Exchangeable Bernoulli distribution, convex polytope, extremal rays, uniform sampling, simulation.
\end{abstract}

%% Here are the title, author names and addresses

\section{Introduction}

De Finetti's representation theorem  asserts that if we have an infinite sequence of exchangeable Bernoulli variables, they can be seen as a mixture of independent and identically distributed Bernoulli variables. Using this representation it is possible to easily  estimate, test and simulate exchangeable Bernoulli variables also in high dimension, one  reason why they are widely used in applications. The only drawback of De Finetti's representation is that it requires an infinite sequence. A finite form of De Finetti's theorem has been given in \cite{diaconis1977finite}, based on the geometrical structure of the class $\EEE$ of $d$-dimensional exchangeable Bernoulli variables. In fact, $\EEE$ is proven to be a $d$-dimensional simplex and therefore each probability mass function in $\EEE$ has a unique representation as a mixture of the $d+1$ extremal points.
This result has been extended in \cite{fontana2018representation} where it is proved that the class of multivariate Bernoulli probability mass function with some  given moments is a convex polytope, i.e. a convex hull of extremal points. Furthermore, \cite{fontana2020model}  provides an analytical expression of the extremal probability mass functions  under exchangeability.
The representation of exchangeable Bernoulli  probability mass functions  as points in a convex polytope and the ability of explicitly finding  the extremal points are key steps in simulating also in high dimension, estimating and testing.

In this paper we study the statistical properties of  $\EEE$  and  $\EEE(p)$, i.e. the class of $d$-dimensional exchangeable Bernoulli probability mass functions with given mean $p$, building on their  geometrical structure. We show that  we can  find the values of a wide class of  measures as convex combinations of their values on the extremal points. As a consequence we are not only able to find  their extremal values on the class, but we can also numerically find their distribution across the class.

Two important issues in the statistical literature are the simulation of high dimensional  binary data with given correlation  and the simulation of negative correlated binary data. The geometrical structure of $\EEE(p)$ allows us to easily construct parametrical families of probability mass functions able to cover the whole correlation range.  We can therefore  select a multivariate Bernoulli probability mass function  with any given correlation in the whole range of admissible correlations. This overcome the limit of the models used in the literature to simulate exchangeable binary data, that only cover positive correlations. This is not a big issue in high dimension, since at the limit negative correlation in not possible, but it comes out to be a limitation for lower dimensions: as an example the three dimensional exchangeable Bernoulli random variables with $p=\frac{1}{3}$ admit negative correlations up to $\rho=-\frac{1}{2}$.
Furthermore, in lower dimensions the geometrical structure also turns out to be crucial  to perform  uniform sampling from convex polytope, that is limited only by the amount of computational
effort required.
Uniform sampling can be used to  find the distribution across the class of   general  statistical indices, that cannot be expresses as combinations of their extremal values.

The explicit form of the extremal points is important in estimate and testing, which are also addressed.
We find   the maximum likelihood estimator for a probability mass function in the classes  $\EEE$  and  $\EEE(p)$  and provide a generalized likelihood test for the null hypothesis of exchangeability or exchangeability with a given mean.

The results presented can be extended to the more general framework of partially exchangeable multivariate Bernoulli distributions. To give an overall idea, we show that partially exchangeable Bernoulli distributions can also be seen as points in a convex polytope, but we leave their investigation to future research.

The paper is organized as follows. Section \ref{not} introduces the polytope of exchangeable Bernoulli distributions and studies the distribution of statistical indices and measures across the class. Section \ref{simulation} addresses high dimensional simulations and discusses some applications. Section \ref{E-T} finds the maximum likelihood  estimator of exchangeable distributions using the representation of a probability mass function as linear combinations of the extremal points and provides a generalized likelihood ratio test for exchangeability. Section \ref{P-exch} opens the way to the generalization of this work to partially exchangeable Bernoulli distributions.

\section{\protect\bigskip Exchangeable Bernoulli distributions \label{not}}

Let $\mathcal{B}_d$ and  $\mathcal{E}_d\subset \mathcal{B}_d$ be the classes of $d$%
-dimensional  Bernoulli distributions and of $d$%
-dimensional exchangeable Bernoulli distributions, respectively. Let $\mathcal{E}_d(p)\subset \EEE$ be
the class of exchangeable Bernoulli distributions with the same Bernoulli
marginal distributions $B(p)$. If $\boldsymbol{X}=(X_1, \dots, X_d)$ is a
random vector with joint distribution in $\mathcal{E}_d$, we denote

\begin{itemize}

\item its cumulative distribution function (cdf) by $F$ and its probability
mass function (pmf) by $f$;

\item the column vector which contains the values of $f$ over $\mathcal{X%
}_d:=\{0, 1\}^d$, by $\ff^{\chi}=(f^{\chi}(\boldsymbol{x}):\boldsymbol{x}\in\mathcal{X}_d)$; we make the non-restrictive hypothesis that the set $\mathcal{X%
}_d$ of $2^d$ binary vectors is ordered according to the
reverse-lexicographical criterion. For example $\mathcal{X}_2=\{00, 10, 01,
11\}$ and $\mathcal{X}_3=\{000, 100, 010, 110, 001, 101, 011, 111\}$. We assume that vectors are column vectors;
%\item the marginal cumulative distribution function and the marginal mass function of $X_i$ by $F_{p,i}$ and $f_{p,i}$  respectively,  $i=1,\ldots,d$;
%\item the values $f_{p,i}(0) \equiv F_{p,i}(0)$ and $f_{p,i}(1)$ by $q$ and $p$ respectively.

\item the expected value of $X_{i}$ as $p$, $\expval[X_i]=p,\,\,\,
i=1,\ldots, d$  and we denote $q=1-p$; %

\item by $\mathcal{P}_d$ the set of permutations on $\{1,\ldots,
d\}$.

%\item since $f_{p}(\boldsymbol{x})=f_{p}(\sigma (\boldsymbol{x}))$ for any $%
%\sigma \in \mathcal{P}_d$ we write $f_{i}:=f_{p}(\boldsymbol{x})$ if $%
%\boldsymbol{x}=(x_{1},\ldots ,x_{d})\in \mathcal{D}_{d}$ and $%
%\#\{x_{i}:x_{i}=1\}=i$. We also denote with $\mathcal{D}_{d}^{j}=\{%
%\boldsymbol{x}\in \mathcal{D}_{d}:\#\{x_{i}:x_{i}=1\}=j\}$

%\item we denote by $\mathcal{C}_d^j$ the combinations of $j$ elements from $%
%\design$, $\#\mathcal{C}^j_d=\binom{d}{j}$.
\end{itemize}

%Given two matrices $A\in \mathcal{M}(n\times m)$ and $B\in \mathcal{M}%
%(d\times l)$ the matrix $A\otimes B=((a_{ij}B)_{1\le i\le n, 1\le j\le
%m})\in \mathcal{M}(nd\times ml)$ indicates their Kronecker product and $%
%A^{\otimes n}$ is $\underbrace{A\otimes \ldots \otimes A}_{n \text{ times}}$.

%We denote the transpose of vector $%\XX$ by $\XX^{T}$.

Let us consider a pmf $f\in \mathcal{E}_d$. Since, by exchangeability,  $f(\boldsymbol{x}%
)=f(\sigma (\boldsymbol{x}))$ for any $\sigma \in \mathcal{P}_d$, any
mass function $f$ in $\mathcal{E}_d$ defines $%
f_{i}:=f(\boldsymbol{x})$ if $\boldsymbol{x}=(x_{1},\ldots ,x_{d})\in
\mathcal{X}_d$ and $\#\{x_{j}:x_{j}=1\}=i$, $i=0,1,\ldots,d$. Therefore we identify a mass
function $f$ in $\mathcal{E}_d$ with the corresponding vector $%
\boldsymbol{f}:=(f_0, \ldots, f_d)$.

Let $\mathcal{S}_d(p)$ be the class of distributions $p_Y$ of   $Y=\sum_{i=1}^dX_i$ with $\boldsymbol{X}\in \mathcal{E}%
_d(p) $. The pmf $p_Y$ is a discrete distribution on  $\{0,\ldots,
d\}$. Let $P(Y=j)=p_Y(j)=p_j$ and $\boldsymbol{p}_Y=(p_0,\ldots, p_d)$.

The map:
\begin{equation}  \label{map0}
\begin{split}
H: \mathcal{E}_d(p)&\rightarrow \mathcal{S}_d(p) \\
f_{j} &\rightarrow p_j={\binom{d}{j}}f_j.
\end{split}%
\end{equation}
is a one-to-one correspondence between $\mathcal{E}_d(p)$ and $\mathcal{S}%
_d(p)$. %Its inverse is
%\begin{equation}\label{map}
%\begin{split}
%E^{-1}: &\mathcal{S}_d(p)\rightarrow \EEE(p)\\
%&p_{j} \rightarrow f_j=\frac{p_{j}}{\binom{d}{j}}.
%\end{split}
%\end{equation}
%
Therefore we have
\begin{equation}  \label{map}
\begin{split}
\mathcal{E}_d(p)&\leftrightarrow \mathcal{S}_d(p)
\end{split}%
\end{equation}
The paper \cite{fontana2020model} also proves that the class of distributions $\mathcal{S}_d(p)$ coincides
with the entire class of discrete distributions with mean $dp$, say  $\SSS(p)\equiv \DDD(dp)$.

Therefore the three classes $\mathcal{E}_d(p)$, $\mathcal{S}_d(p)$ and $%
\mathcal{D}_d(dp)$ are essentially the same class, i.e.
\begin{equation}  \label{map1}
\begin{split}
\mathcal{E}_d(p)&\leftrightarrow \mathcal{S}_d(p)\equiv\mathcal{D}_d(dp).
\end{split}%
\end{equation}
Thanks to the above result
we can look for the generators of $\mathcal{D}_d(dp)$ to find the generators of $\mathcal{S}_d(p)$ which  are in one-to-one relationship with the
generators of $\mathcal{E}_d(p)$. This approach simplifies the
search.

The relationship between exchangeable and discrete distributions is more general, since in the same way it can be proved that
\begin{equation}  \label{map2}
\begin{split}
\mathcal{E}_d&\leftrightarrow \mathcal{S}_d\equiv\mathcal{D}_d.
\end{split}%
\end{equation}

  Furthermore, by
exchangeability the moments depend only on their order, we therefore use $%
\mu_{{\alpha}}$ to denote a moment of order $\alpha=\text{ord}(\boldsymbol{%
\alpha})=\sum_{i=1}^d\alpha_i$, where $\boldsymbol{\alpha}\in \mathcal{X}_d$%
. For example we have $\mu_1=p$. %
We also observe that the Pearson's correlation $\rho$ between two Bernoulli variables $%
X_i \sim B(p)$ and $X_j \sim B(p)$ is related to the second-order moment $%
\mu_{2}= \expval[X_i
X_j]$ as follows
\begin{equation}  \label{eq:rho_e12}
\mu_2=\rho pq+p^2.
\end{equation}
\begin{comment}
From $-1\leq \rho \leq 1$ we get

\begin{equation}\label{eqrho}
p^2 - pq\leq \expval[X_iX_j] \leq p^2+pq.
\end{equation}
\end{comment}

For the sake of simplicity we write $\XX\in \EEE$ or  $\XX\in \EEE(p)$ meaning that its distribution belongs to $\EEE$ or $\EEE(p)$, respectively. Analogously for $Y\in \mathcal{S}_d$ or $Y\in \mathcal{S}_d(p)$.

\subsection{Polytope of Exchangeable Bernoulli distributions}
We recall that a polytope (or more specifically a $d$-polytope)
is the convex hull of a finite set of points in $\RR^d$ called the extremal points of the polytope. We say that a set of $k$ points is affinely independent
if no one point can be expressed as a linear convex combination of the others. For example, three points are affinely independent if they are not on the same line, four
points are affinely independent if they are not on the same plane, and so on. The convex hull of $k+1$  affinely
independent points is called a simplex or $k$-simplex. For example, the line segment joining two points is a
1-simplex, the triangle defined by three points is a 2-simplex, and the tetrahedron defined by four points is a
3-simplex.

The class of discrete distributions $\pp=(p_0,\ldots, p_d)$ on $\{0,\ldots, d\}$ is the  $d$-simplex   $\Delta_d=\{\pp: p_i\geq 0, \sum_{i=0}^dp_i=1\}$, with extremal points $\gg_j=(0,\ldots,1,\ldots, 0)$, $j=0,\ldots, d$.
By means of  the equivalence  $\mathcal{S}_d\equiv \mathcal{D}_d$ and the  map $H$ we have  that the class  $\EEE$ is a $d$-simplex. In 2006, \cite{diaconis1977finite} proved that $\EEE$ has $d+1$ extremal points $\gg'_0, \ldots, \gg'_{d}$, where $\gg'_j=(g'_j(\xx); \xx\in \chi_d)$  is the measure
\begin{equation*}
g'_{j}(\xx)=\left\{
\begin{array}{cc}
\frac{1}{\binom d j} &  \text{if}\,\,\,  \#\{x_h: x_h=1\}=j \\
0 & \text{otherwise}%
\end{array}
\right..
\end{equation*}
The same can be obtained by inverting the map $H$.

In \cite{fontana2020model}, the authors proved that the class of discrete distributions on $\{0,\ldots, d\}$ with mean $p$,
 $\mathcal{S}_{d}(p )$, is a $d$-polytope, i.e. for any $Y\in\mathcal{S}_{d}(p )$  there
exist $\lambda_1, \ldots, \lambda_{n_p}\geq 0$ summing up to 1 and $\boldsymbol{r}_{j}\in\mathcal{S}_{d}(p )$  such that
\begin{equation}\label{cone}
\boldsymbol{p}_{Y}=\sum_{j=1}^{n_{p}}\lambda _{j}\boldsymbol{r}_j.
\end{equation}
We call  $\boldsymbol{r}_{j}=(r_{j}(0), \ldots, r_j(d))$, $j=1,\ldots, n_p$  the extremal points or the extremal densities.
The $d$-polytope is the set of solutions of a linear system. As a consequence we can find the support of the extremal rays and also
their analytical expression. The following two propositions are proved in \cite{fontana2020model}.

\begin{proposition}
\label{multinulli} Let us consider the linear system
\begin{equation}
A\boldsymbol{z}=0,\,\,\, \boldsymbol{z}\in \mathbb{R}_+^{d+1}  \label{system}
\end{equation}%
where $A$ is a $m\times (d+1)$ matrix, $m\leq d$ and $\rank(A)=m$. The
extremal rays of the system \eqref{system} have at most $m+1$ non-zero
components.
\end{proposition}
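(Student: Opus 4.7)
The plan is to characterize extremality of a ray of the cone $C:=\{\zz\in\RR_+^{d+1}: A\zz=0\}$ by a rank condition on the submatrix of $A$ picked out by the support of the ray, and then read off the bound $|\text{supp}(\zz)|\le m+1$ directly from $\rank(A)=m$.

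First I would fix a nonzero $\zz\in C$, let $S=\{i : z_i>0\}$ be its support, and let $A_S$ denote the submatrix of $A$ formed by the columns indexed by $S$. Writing $\zz_S\in\RR^{|S|}_{>0}$ for the positive part of $\zz$, note that $A_S\zz_S=0$. I would then prove the key equivalence: $\zz$ generates an extremal ray of $C$ if and only if $\dim\ker A_S=1$.

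For the ``only if'' direction I argue by contrapositive: if $\dim\ker A_S\ge 2$, choose $\ww\in\ker A_S$ not proportional to $\zz_S$. Since every coordinate of $\zz_S$ is strictly positive, there is an $\varepsilon>0$ such that $\zz_S\pm\varepsilon\ww\ge 0$; extending by zeros outside $S$ yields $\zz_1,\zz_2\in C$ with $\zz=\tfrac12(\zz_1+\zz_2)$ and neither proportional to $\zz$, contradicting extremality. For the ``if'' direction, any decomposition $\zz=\zz_1+\zz_2$ with $\zz_1,\zz_2\in C$ forces $\text{supp}(\zz_i)\subseteq S$ and $A_S(\zz_i)_S=0$; when $\dim\ker A_S=1$ this pins $(\zz_i)_S$ to a nonnegative multiple of $\zz_S$, so both summands lie on the ray through $\zz$.

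Having established the equivalence, the conclusion is pure linear algebra: for any extremal ray, $\rank(A_S)=|S|-\dim\ker A_S = |S|-1$, and since $A_S$ is a submatrix of $A$ we have $\rank(A_S)\le\rank(A)=m$, whence $|S|\le m+1$. The only place that requires genuine care is the perturbation step in the ``only if'' direction, where one must exploit the strict positivity of $\zz_S$ on its support in order to stay inside $\RR_+^{d+1}$ after perturbing by $\pm\varepsilon\ww$; everything else is essentially bookkeeping.
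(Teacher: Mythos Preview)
Your argument is correct and is the standard characterization of extreme rays of a polyhedral cone $\{\zz\in\RR_+^{d+1}:A\zz=0\}$: a nonzero $\zz$ spans an extreme ray if and only if the kernel of the submatrix $A_S$ indexed by its support is one-dimensional, and rank--nullity together with $\rank(A_S)\le\rank(A)=m$ then gives $|S|\le m+1$. Both directions of the equivalence are handled cleanly; the perturbation step is exactly where the strict positivity on $S$ is needed, and you flag that.

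As for comparison with the paper: there is nothing to compare. The paper does not prove Proposition~\ref{multinulli} in the text; it merely states it and attributes the proof to \cite{fontana2020model}. Your self-contained argument is therefore a genuine addition rather than a paraphrase, and it is the natural proof one would expect for this classical fact about extreme rays of pointed polyhedral cones.
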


\begin{proposition}
\label{binu} %Case one: $pd$ is not integer.
The extremal rays $\boldsymbol{r}_{j}$  in \eqref{cone} have support on two points $(j_1, j_2)$  with $j_1=0,1,\ldots, j_1^{M}$, $j_2=j_2^m, j_2^m+1, \ldots, d$, $j_1^M$ is
the largest integer less than $pd$ and $j_2^m$ is the smallest integer
greater than pd.  They are
\begin{equation}  \label{binul}
r_{j}(y)=\left\{
\begin{array}{cc}
\frac{j_2-pd}{j_2-j_1} & y=j_1 \\
\frac{pd-j_1}{j_2-j_1} & y=j_2 \\
0 & \text{otherwise}%
\end{array}
\right..
\end{equation}

If $pd$ is integer the extremal rays contain also
\begin{equation}  \label{onenul}
r_{pd}(y)=\left\{
\begin{array}{cc}
1 & y=pd \\
0 & \text{otherwise}%
\end{array}
\right..
\end{equation}
 If $pd$ is not integer there are $n_p=(j_1^M+1)(d-j_1^M)$ ray
densities.
If $pd$ is integer there are $n_p=d^2p(1-p)+1$ ray densities.

\end{proposition}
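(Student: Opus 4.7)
The first step is to recognise $\mathcal{S}_d(p)$ as the slice $\{\sum_i z_i = 1\}$ of an explicit cone to which Proposition~\ref{multinulli} applies. Combining the two affine defining conditions $\sum_i z_i = 1$ and $\sum_i i\, z_i = dp$ into the single homogeneous relation $\sum_{i=0}^d (i - dp)\, z_i = 0$, one identifies $\mathcal{S}_d(p)$ with the normalized slice of
\begin{equation*}
\cone \;=\; \{\boldsymbol{z} \in \mathbb{R}_+^{d+1} : A\boldsymbol{z} = 0\}, \qquad A \;=\; \bigl(-dp,\; 1-dp,\; 2-dp,\; \ldots,\; d-dp\bigr),
\end{equation*}
where $A$ is a $1 \times (d+1)$ matrix of rank $m = 1$. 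Since $\sum_i z_i > 0$ for every nonzero $\boldsymbol{z} \in \mathbb{R}_+^{d+1}$, the hyperplane $\sum_i z_i = 1$ meets each nontrivial ray of $\cone$ transversally, so the vertices of $\mathcal{S}_d(p)$ correspond bijectively to the extremal rays of $\cone$ after normalization.

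\textbf{Support classification and values.} Proposition~\ref{multinulli} then immediately gives that every extremal ray has at most $m + 1 = 2$ nonzero components, and I would split into two cases. A singleton support $\{i\}$ with $z_i > 0$ forces $(i - dp)\, z_i = 0$, hence $i = dp$, which can hold only if $dp$ is an integer, and in that case yields precisely the ray \eqref{onenul}. A two-point support $\{j_1, j_2\}$ with $j_1 < j_2$ requires $(j_1 - dp)\, z_{j_1} + (j_2 - dp)\, z_{j_2} = 0$ with both components strictly positive, which holds iff $j_1 - dp$ and $j_2 - dp$ have opposite signs, i.e.\ $j_1 < dp < j_2$; translated into the integers of the statement this is exactly $0 \leq j_1 \leq j_1^M$ and $j_2^m \leq j_2 \leq d$. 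With the support fixed, normalizing $z_{j_1} + z_{j_2} = 1$ leaves a single linear equation in one unknown whose solution is precisely \eqref{binul}; uniqueness (up to scale) of this solution also confirms that the rays so obtained are genuinely extremal.

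\textbf{Counting.} The final step is a direct count. When $dp \notin \mathbb{Z}$ one has $j_2^m = j_1^M + 1$, so admissible pairs $(j_1, j_2)$ number $(j_1^M + 1)(d - j_1^M)$. When $dp \in \mathbb{Z}$ there are $dp(d - dp) = d^2 p(1-p)$ two-point rays plus the singleton ray, giving $d^2 p(1-p) + 1$ in total. The only step I expect to be mildly subtle is the first one, namely the reduction to a rank-one homogeneous system and the verification that normalization turns extremal rays of $\cone$ into vertices of $\mathcal{S}_d(p)$; once that is in place, the remainder is a one-parameter linear calculation and elementary counting.
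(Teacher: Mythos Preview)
Your proof is correct and follows exactly the approach the paper sets up: write $\mathcal{S}_d(p)$ as the normalized slice of the cone $\{\boldsymbol{z}\geq 0:\sum_i(i-dp)z_i=0\}$, apply Proposition~\ref{multinulli} with $m=1$ to bound supports by two points, then solve the resulting one-parameter system and count. The paper itself does not prove the proposition here---it cites \cite{fontana2020model}---but your argument is precisely what the surrounding text (and the analogous Proposition~\ref{sd} in Section~\ref{P-exch}) indicates as the intended route.
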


Using the equivalence $\mathcal{S}_d(p)\equiv \mathcal{D}_d(pd)$  a pmf in $\mathcal{S}_d(p)$ is a pmf on $%
\{0,\ldots,d\}$ with mean $pd$. Thanks to the map $H$ in Eq. \eqref{map0}
this is also equivalent to state that
$ \mathcal{E}_{d}(p )$    is a $d$-polytope, i.e. for any $\XX\in\mathcal{E}_{d}(p )$     there
exist $\lambda_1, \ldots, \lambda_{n_p}\geq 0$ summing up to 1 and  $\boldsymbol{e}_{i}\in\mathcal{E}_{d}(p )$ such that
\begin{equation}\label{cxray}
\boldsymbol{f}=\sum_{j=1}^{n_{p}}\lambda _{j}\boldsymbol{e}_{j}.
\end{equation}
We  call $\boldsymbol{e}_{j}$  the extremal points of $\mathcal{E}_{d}(p )$. The map $H$ allows us to explicitly find the extremal points $\ee_j$. They are:

\begin{equation}  \label{onenul}
e_{j}(\xx)=\left\{
\begin{array}{cc}
\frac{r_j(k)}{\binom d k} &  \text{if}\,\,\,  \#\{x_h: x_h=1\}=k \\
0 & \text{otherwise}%
\end{array}
\right..
\end{equation}

We denote by $R_{j}$ and $R_{pd}$ the random variables whose pmfs are
${r}_{j}$ and ${r}_{pd}$ respectively and  by $\EE_{j}$ and $\EE_{pd}$ the random variables whose pmfs are
${e}_{j}$ and ${e}_{pd}$ respectively. We will
refer to ${r}_{j}$,  ${r}_{pd}$,  ${e}_{j}$ and ${e}_{pd}$ as %
extremal ray densities. If it clear from the context, we will omit
extremal for the sake of simplicity.

The following proposition is a consequence of the geometrical structure of the class of exchangeable distributions and their sums. It allows us to have an analytical expression for a wide class of statistical  indices defined as functionals on $\EEE$ and $\SSS$, such as all the moments of the Bernoulli exchangeable distributions.
\begin{proposition}\label{hull-phi}
\begin{enumerate}
\item \label{pa}
Let $\XX\in \EEE$ [$\XX\in \EEE(p)$] and let be $\phi_d:\RR^d\rightarrow \RR$ a measurable function. Then
\begin{equation}
E[\phi_d(\XX)]=\sum_{i=1}^m\lambda_iE[\phi_d(\EE_j)],
\end{equation}
where $\EE_1, \ldots, \EE_m$ are the extremal rays of $ \EEE$ [$ \EEE(p)$].
\item \label{pb}
Let $Y\in \SSS$ [$Y\in \SSS(p)$] and let be $\phi:\RR\rightarrow \RR$ a measurable function. Then
\begin{equation}
E[\phi(Y)]=\sum_{i=1}^m\lambda_iE[\phi(R_j)],
\end{equation}
where $R_1, \ldots, R_m$ are the extremal rays of $ \SSS$ [$ \SSS(p)$].
\end{enumerate}
\end{proposition}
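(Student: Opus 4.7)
The plan is to derive both parts from the polytope/simplex representation of the pmf established earlier in the section, together with linearity of expectation. The key observation is that $E[\phi_d(\XX)]$ (respectively $E[\phi(Y)]$) is a linear functional of the underlying probability mass function, and both $\EEE$ and $\EEE(p)$ (respectively $\SSS$ and $\SSS(p)$) have already been identified as convex hulls of finitely many extremal pmfs.

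For part \ref{pa}, I would take $\XX \in \EEE(p)$ and use the convex decomposition $\ff = \sum_{j=1}^{n_p} \lambda_j \ee_j$ from \eqref{cxray}. Expanding the expectation as a finite sum over $\mathcal{X}_d$, substituting the decomposition, and swapping the two finite summations gives
\begin{equation*}
E[\phi_d(\XX)] = \sum_{\xx \in \mathcal{X}_d} \phi_d(\xx) \sum_{j=1}^{n_p} \lambda_j e_j(\xx) = \sum_{j=1}^{n_p} \lambda_j E[\phi_d(\EE_j)].
\end{equation*}
The case $\XX \in \EEE$ is handled identically, using the $d$-simplex decomposition $\ff = \sum_{j=0}^{d} \lambda_j \gg'_j$ coming from \cite{diaconis1977finite}.

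Part \ref{pb} then follows by the same two-line argument applied to the decomposition \eqref{cone} of $Y \in \SSS(p)$, and to the delta-measure decomposition for $Y \in \SSS \equiv \DDD$, whose extremal rays are simply the deterministic distributions concentrated at $0, 1, \ldots, d$. All sums involved are finite, so no measurability issue arises beyond $\phi$ being defined on the finite support, and the exchange of summation is immediate.

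I do not foresee a real obstacle: once the polytope and simplex decompositions of $\EEE$, $\EEE(p)$, $\SSS$ and $\SSS(p)$ are in hand, which they are by Propositions \ref{multinulli}--\ref{binu} and the accompanying discussion, the proposition is an immediate consequence of linearity. The content of the statement is less the proof itself and more the observation that any linear statistical functional on these classes inherits the same convex-combination structure as the pmf, a fact that will be exploited later to compute sharp bounds and to trace out the distribution of statistical indices across the class.
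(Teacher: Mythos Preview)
Your proposal is correct and follows essentially the same approach as the paper: write the expectation as a finite sum over the support, substitute the convex decomposition of the pmf (\eqref{cxray} for $\EEE(p)$, the simplex decomposition for $\EEE$, and analogously for $\SSS$/$\SSS(p)$), and swap the two finite sums. The paper proves only part~\ref{pa} explicitly and declares part~\ref{pb} analogous, which matches your argument exactly.
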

\begin{proof}
We only proof part \ref{pa}, because part \ref{pb} is analogous.

It holds

\begin{equation}
E[\phi_d(\XX)]=\sum_{\xx\in \chi_d}\phi_d(\xx)f(\xx)=\sum_{\xx\in \chi_d}\phi_d(\xx)\sum_{i=1}^m\lambda_jr_j(\xx)=\sum_{i=1}^m\lambda_iE[\phi_d(\EE_j)].
\end{equation}
\end{proof}
 Functionals  defined on a class $\mathcal{F}_0$ of $d$-dimensional distributions,  $\Phi: \mathcal{F}_0\rightarrow \RR$, $\Phi(f)=E[\phi_d(\XX_f)]$, where $\XX_f\in \mathcal{F}_0$ has pmf $f$, are commonly used in applications to define measures of risk \cite{wang2020risk}.
We call expectation measures the  measures defined by expectations and by their one-to-one transformations.
Proposition \ref{hull-phi} states that  measures defined by expectation of the mass functions in a given class are themselves a convex polytope  whose extremal points are the measures evaluated on  the extremal rays of the class.
Therefore, they have bounds  on the  extremal points.
 Examples of such functionals in our framework are:
\begin{enumerate}
\item Moments and cross moments of distributions in $\EEE$ or $\EEE(p)$ and moments of discrete distributions in  $\mathcal{D}_d$ or $\mathcal{D}(dp)$.

\item The entropic risk measure on $\SSS$ or  $\SSS(p)$ for $\gamma\in (0,\infty)$:
\begin{equation}
\Gamma(f)=\frac{1}{\gamma}\log(E[e^{-\gamma Y}]),
\end{equation}
where $Y$ has pmf $f$.
\item Excess loss function on $\SSS$ or  $\SSS(p)$, defined by
\begin{equation}
\Phi(Y)=E[(Y-k)^+], \,\,\, k\in \RR,
\end{equation}
where $(x-k)^+=\max\{x-k, 0\}$.
\item Von Neumann-Morgestern expected utilities on $\SSS$ or  $\SSS(p)$. These are expectation measures, where the function  $\phi_d$  is some increasing utility function.
\end{enumerate}

Notice that also the entropic risk measure has its bounds on the extremal points, since the logarithm is a monotone transformation.

We focus on the cross $\alpha$-moments  $\mu_{\alpha}=E[X_1\cdots X_{\alpha}]$. We have
\begin{equation*}
\begin{split}
\mu_{\alpha}=\sum_{k=\alpha}^d\frac{\binom{d-\alpha}{k-\alpha}}{\binom{d}{k}}p_k,
\end{split}
\end{equation*}
where $p_k=P(\sum_{i=1}^dX_i=k)$.
We recall that  the second order moment of the class $\SSS(p)$
are given by
\begin{equation}  \label{smu2}
E[Y^2]=E[(X_1+\ldots+X_d)^2]=pd+d(d-1)\mu_2.
\end{equation}
In the next section, using Proposition \ref{hull-phi} and convexity,  we numerically  find the distribution of the above measures across the class of pmf where they are defined.

\subsection{Expectation measure distributions}  \label{sec:exp}

We want to determine the distribution $E[\phi_d(\XX)]$ where $\XX$ is a random variable which has been chosen uniformly at random from $\EEE(p)$ or $ \EEE$. Since classes of  multivariate Bernoulli distributions  with pre-specified moments, as e.g. $\EEE(p)$, are polytopes,  the representation of each mass function as  a convex linear combination of the extremal points is not unique.
Therefore  we have to perform a triangularization of $\EEE(p)$. To reduce dimensionality we aim to work on  $\mathcal{D}_d(dp)$. 
This can be done using the map $H$ in \eqref{map0} under the exchangeability condition  $f(\xx)=f(\sigma(\xx))$ for $f\in \EEE(p)$. As a consequence  the distribution of   $E[\phi_d(\XX)]$ can be studied in  $\mathcal{D}_d(dp)$ if $\phi_d(\xx)=\phi_d(\sigma(\xx))$;  we make this assumption in this section. We  perform a triangularization of $\mathcal{D}_d(dp)$ that  is equivalent  to perform a triangularization of the polytope $\mathcal{C}=\{\pp: p_i\geq 0, \sum_{i=0}^d p_i=1, \sum_{i=0}^d i p_i= dp\}$.
 The dimension of $\mathcal{C}$ is $d-1$ because $\mathcal{C}$ is defined by two constraints.
We can partition $\mathcal{C}$ into simplices  $\mathcal{T}_i, \, i\in \mathcal{I}$ (e.g. using a Delaunay triangulation)
\begin{equation}\label{Triang}
\mathcal{C}=\bigcup_{i \in \mathcal{I}} \mathcal{T}_i,
\end{equation}
where $\mathcal{I}$ is a proper set of indices, and $\mathcal{T}_i \cap \mathcal{T}_j = \emptyset$ for $i \neq j$. We observe that from a geometric point of view the intersection between two simplices $\mathcal{T}_i $ and $\mathcal{T}_j$ is not empty, being the common part of their borders. But this common part has zero probability of being selected and so we can neglect it assuming that each $\mathcal{T}_i,\,\,  i\in \mathcal{I}$ coincides with its interior part.

Let $\phi(y)=\phi_d(\xx)$, where $y=\sum_{i=1}^d x_i$, $y=\{0,\ldots, d\}$. Let us denote by $F_\phi$ the distribution of $E[\phi(Y)]$, where $Y$ is a random variable with pmf $p_Y$  from $\SSS(p)$ or $ \SSS$. We get
\begin{equation}\label{prob_tot}
F_\phi(t)=P(E[\phi(Y_p)] \leq t)=\sum_{i \in \mathcal{I}}P( \mathcal{T}_i) P(E[\phi(Y_p)] \leq t| \mathcal{T}_i).
\end{equation}

If we assign a uniform measure on the space $\EEE(p)$ the probability $P( \mathcal{T}_i) $ of sampling a probability mass function in the simplex $\mathcal{T}_i$ is simply the ratio between the volume of  $\mathcal{T}_i$ and the total volume of $\mathcal{C}$, i.e.
\begin{equation} \label{volsim}
P(\mathcal{T}_i)=\frac{\text{vol}(\mathcal{T}_i)}{ \text{vol}(\mathcal{C})}.
\end{equation}
The volume of $\mathcal{C}$ can be easily computed because the volume  of an $n$-simplex in $n$-dimensional space with vertices $(v_0, \ldots, v_n)$ is
\[
{\displaystyle \left|{1 \over n!}\det {\begin{pmatrix}v_{1}-v_{0},&v_{2}-v_{0},&\dots ,&v_{n}-v_{0}\end{pmatrix}}\right|}
\]
where each column of the $n \times n$ determinant is the difference between the vectors representing two vertices \cite{stein1966note}.

The probability $P(E[\phi(Y)] \leq t| \mathcal{T}_i)$ is the ratio between the volume of the region  $\mathcal{R}_{i,t}=\{p_Y \in \mathcal{T}_i: E[\phi(Y)] \leq t\}$ and the volume of $\mathcal{T}_i$, i.e.
\begin{equation} \label{volfrustum}
P(E[\phi(Y)] \leq t| \mathcal{T}_i)=\frac{\text{vol}(\mathcal{R}_{i,t})}{\text{vol}(\mathcal{T}_i)}.
\end{equation}
The computation of the volume of $\mathcal{R}_{i,t}$ will depend on the definition of $\phi$ in the expectation measure $E[\phi(Y)]$.

We now consider the $k$-order moments $\mu_k^{(Y)}$ of the random variable $Y=X_1+\ldots+X_d$ whose pmf is denoted by $p_Y$
\[
\mu_k^{(Y)}=E[Y^k]=\sum_{i=0}^d i^k p_Y(i)=\sum_{i=0}^d i^k p_i
\]
From Eq. \eqref{map0} we have $p_Y(i)={\binom{d}{i}}f_i$ and  $f_{i}:=f_{p}(\boldsymbol{x})$
for $\boldsymbol{x}=(x_{1},\ldots ,x_{d})\in \mathcal{X}_d$ and $\#\{x_{k}:x_{k}=1\}=i$.

The pmf $p_Y$ will lie in exactly one of the simplices $\mathcal{T}_i, i \in \mathcal{I}$. Let's denote this simplex by $\mathcal{T}_{i^\star}$. We can write $p_Y=\sum_{j \in \mathcal{J^*}} \lambda_j r_j$, where $\mathcal{J^*}$ is the set of indexes that defines the subset of rays which generate the simplex $\mathcal{T}_{i^\star}$, i.e. $\mathcal{T}_{i^\star}=\text{simplex}(\{r_j: j \in \mathcal{J^*}\})$. As a corollary of Proposition \ref{hull-phi} we can write
\[
\mu_k^{(Y)}=\sum_{j \in \mathcal{J^*}} \lambda_j \mu_k^{(j)}
\]
where $\mu_k^{(j)}$ are the $k$-moments of the ray random variables $R_j$,  $ \mu_k^{(j)}=E[R_j^k]$.
For $k$-order moments the region $\mathcal{R}_{i,t}=\{p_Y \in \mathcal{T}_i: \mu_k(Y) \leq t\}$  is  the subset of the standard simplex defined as $\{ (\lambda_j; j\in \mathcal{J^*}) : \lambda_j \geq0, \sum_{j\in \mathcal{J^*}} \lambda_j=1, \sum_{j \in \mathcal{J^*}} \lambda_j \mu_k^{(j)}\leq t\}$. For $k$-order moments the ratio of the volumes in Eq. \eqref{volfrustum} can be computed using an exact and iterative formula, see \cite{varsi1973multidimensional} and \cite{cales2018practical}.

The same methodology can be applied also to other measures defined as function of expected values like the entropic risk measures.

\section{High dimensional simulation}\label{simulation}
The representation of Bernoulli pmfs as convex combinations of ray densities allows us to sample from high dimensional  exchangeable Bernoulli distributions in $\EEE(p)$. The simplest way to do that is to generate directly from a ray density. Ray densities are extremal and have support on two points,  we can also use combinations of rays  to choose a distribution in the interior of the polytope. As an example we could choose $\lambda_i=\frac{1}{n_p}$  in \eqref{cxray}. Such a choice identifies  a pmf inside the polytope.  Nevertheless, the ray densities allow us to simulate from a family of distributions that cover the whole range spanned by a measure of dependence.
Let $M$ and $m$ the maximum and minimum values of an expectation measure. If $r_M$ and $r_m$ are two corresponding  ray densities, the parametrical family $f=\lambda r_m+(1-\lambda)r_M$ span the whole range $[m, M]$. We can therefore use this family to simulate a sample of binary data from a pmf with a given value of the measure or we can simulate binary variables with different values of the measure, by moving $\lambda$. An important example is correlation.
The problem to simulate from multivariate Bernoulli distributions with given correlations and in particular  negative correlations  is of interest in many applications and is addressed in the statistical literature, see \cite{oman2009easily}.
The geometrical structure of $\EEE(p)$ allow us to solve this problem for exchangeable random vectors. In fact, we can choose a pmf with the required correlation simply by finding two ray densities in $\EEE(p)$ with the  minimum and maximum of allowed correlations, say  $r_{\rho_m}$  and $r_{\rho_M}$ respectively. The pmfs in the family $r_{\lambda}=\lambda r_{\rho_m}+(1-\lambda)r_{\rho_M}$ span then whole correlation range.
Simulation of a pmf in this family is easy and can be performed in high dimension.
We provide  general  algorithm to simulate  high dimensional exchangeable  Bernoulli   distribution from $\EEE(p)$, given the vector $\boldsymbol{\lambda}=(\lambda_1, \ldots, \lambda_{n_p})$ in \eqref{cone} or equivalently in \eqref{cxray}.  The vector $\boldsymbol{\lambda}$ can be chosen as we discussed above or it can be randomly chosen
  by giving a distribution $P_{\Lambda}$ on the simplex $\Lambda=\{ \boldsymbol{\lambda}\in \RR^{n_p}: \sum_{i=1}^{n_p}\lambda_i=1, \lambda_i>0, i=1,\ldots, n_p\}$ in \eqref{cone}.
Once $\boldsymbol{\lambda}$ is selected, it represents a probability distribution on the set of ray densities, i.e. $\lambda_j$ is the probability to extract the ray density $r_j$.
Given the pmf $\boldsymbol{\lambda}$ on the set of extremal rays, the following algorithm allows us to simulate in high dimension:
\begin{table}[h!]\label{Algo}
%\begin{center}
\begin{tabular}{l}
\hline
{\bf Algorithm}  \\
\hline
{\bf Input}: the expected value $p$, the dimension $d$, the vector $\lambda\in \Lambda$. \\
\hline
1) Select a   ray density $r_j$ with probability  $\lambda_i$; $r_j$ has support on $j_1, j_2$ \eqref{binul}.\\
2) Select $j_*\in \{j_1, j_2\}$  with probability $r_j(j_1)$ and $r_j(j_2)$ in \eqref{binul} respectively.\\
3) Select a binary vector with $j_*$ ones among the combinations $\binom {d}{ j_*}$.\\
\hline
{\bf Output}: One realization of a $d$ dimensional binary variable with pmf in \eqref{cxray}. \\
\hline
\end{tabular}
%\end{center}
\end{table}

We observe that the Algorithm does not require to store any big structure and then it can be easily used for large $d$, e.g. $d=10^5$.

This case is interesting because the families of multivariate Bernoulli variables commonly used for simulation of exchangeable binary variables incorporate only positive correlation. We consider here  two families of exchangeble Bernoulli models used in the literature  to simulate high dimensional binary data. The first family  is proposed in \cite{jiang2020set} and we term it family of  one-factor models, taking the name from the  one-factor models used in credit risk, that have a similar  dependence structure.   The second family  is the mixture model based on De Finetti's representation theorem.

The construction proposed in \cite{jiang2020set} provides an algorithm to generate binary data with given marginal Bernoulli distributions with means $(p_1,\ldots, p_d)$ and exchangeable dependence structure, meaning that they are equicorrelated. However, by assuming that the marginal parameters are equal to a common parameter $p$, their construction gives a vector $\XX\in \EEE(p)$. This is the case considered here. We therefore define the multivariate Bernoulli variable in this framework.
Let
\begin{equation}
X_i=(1-U_i)Y_i+U_iZ, i=1,\ldots, d,
\end{equation}
where $U_i\sim B(\sqrt{\rho})$, $Y_i\sim B(p)$, $i=1,\ldots, d$ and $Z\sim B(p)$ and they are independent. We say that $\XX=(X_1, \ldots, X_n)$ and its pmf $f\in \EEE(p)$ have a one-factor structure.
Clearly, $\XX$ is exchangeable, have distribution $B(p)$ and correlation $\rho$. By construction we have $\rho\geq 0$ and the case $\rho=0$ implies that $U_i$, $i=1,\ldots, n$ put all the mass on $0$.

According to De Finetti's Theorem if $f\in \EEE(p)$ is the pmf of a random vector $(X_1, \ldots, X_d)$ extracted from an exchangeable sequence then $f$ has the representation

\begin{equation*}
f(j)=\binom{d}{j}\int_{0}^{1}p^{k}(1-p)^{d-k}d\Psi (p),
\end{equation*}%
where $\Psi (p)$ is a pdf on $[0,1]$. Clearly, these vectors can only have positive correlations.
One of the most used mixed Bernoulli model is the $\beta $-mixing models, where:
 $\Psi \sim \beta (a,b)$ is the mixing variable. In this case we have
\begin{equation*}
\begin{split}
& p=E[\Psi ] \\
& \mu _{2}=E[\Psi ^{2}].
\end{split}%
\end{equation*}%
Therefore we estimate the $\beta $ parameters $a$ and $b$ by solving the
equations
\begin{equation*}
\begin{split}
& p=\frac{a}{a+b} \\
& \mu _{2}=\frac{a(a+1)}{(a+b)(a+b+1)}.
\end{split}%
\end{equation*}%
For this model $\rho =0$ is not admissible, therefore the model cannot include independence.
Notice that the
$\beta $ model has two parameters and, chosen a $\beta$-mixing model in $\EEE(p)$ it can be parametrized by  $\rho $.

In \cite{fontana2020model} the authors analytically found the correlation bounds for exchangeable pmf for each dimension $d$ and the minimum attainable correlation.
The bounds for correlations are:
\begin{itemize}
\item if $pd$ is not integer
\begin{equation}\label{corint}
\frac{\frac{1}{d(d-1)}[-j_1^M(j_1^M+1)+2j_1^M pd]-p^2}{p(1-p)}\leq \rho\leq
1.
\end{equation}
\item If $pd$ is integer
\begin{equation}\label{cornint}
-\frac{1}{d-1}\leq \rho\leq 1.
\end{equation}
\end{itemize}
In both cases the minimal correlation $\rho_m$ goes to zero  if the dimension increases,  according to De Finetti's representation theorem. Therefore, the capability to generate binary data with negative correlations is more important in low dimensions, where we are also able to perform uniform sampling, as discussed in the next section.

\subsection{Uniform simulation}\label{sec:nonexp}
Let's start considering uniform sampling from $\EEE$.
From Eq.\eqref{map2} we know $\mathcal{E}_d \leftrightarrow \mathcal{S}_d\equiv\mathcal{D}_d$. It follows that sampling uniformly at random from $\mathcal{E}_d$ is equivalent to  sampling uniformly at random from $\mathcal{D}_d$ and then it  is equivalent  to  sampling uniformly at random from the $d$-simplex $\Delta_d=\{\pp: p_i\geq 0, \sum_{i=0}^d p_i=1\}$, which is a standard topic in the statistical literature.

Let's now consider uniform sampling from $\EEE(p)$.
From Eq.\eqref{map1} we know $\mathcal{E}_d(p) \leftrightarrow \mathcal{S}_d(p)\equiv\mathcal{D}_d(dp)$. It follows that sampling uniformly at random from  $\mathcal{E}_d(p)$ is equivalent to  sampling uniformly at random from $\mathcal{D}_d(dp)$ and then is equivalent  to  sampling uniformly at random from the polytope $\mathcal{C}=\{\pp: p_i\geq 0, \sum_{i=0}^d p_i=1, \sum_{i=0}^d i p_i= dp\}$. We therefore have to consider the triangularization of $\mathcal{C}$  in \eqref{Triang}. We can consider sampling from $\EEE$ as a special case of sampling from $\EEE(p)$. For this purpose when sampling from $\EEE$ we denote  the d-simplex by $\mathcal{T}_1$, $\mathcal{T}_1\equiv \Delta_d$. We have $\mathcal{I}=\{1\}$.
Uniform sampling allow us to find the empirical distribution of some statistical indices that are not expectation measures.
In fact, there are famous measures that are not expectation measures and for which  Proposition \ref{hull-phi} does not apply.
For example the $\alpha$-quantile or value at risk of a balanced portfolio of exchangeable Bernoulli variables, widely used in applications as a measure of risk.
Let $Y\in \SSS(p)$, the $\alpha$-quantile $%
{q}_{\alpha}$ at level $\alpha $ is defined by
\begin{equation*}
{q}_{\alpha}(Y)=\inf \{y\in \mathbb{R}:P(Y\leq y)\geq \alpha \}.
\end{equation*}%

The $\alpha$-quantile is not a convex measure, nevertheless \cite{fontana2020model} proved the following proposition.

\begin{proposition}
\label{VEbound}
 Let $Y\in \mathcal{S}_d(p)$. Then
\begin{equation*}
\min_{R} {q}_{\alpha}(R)\leq \text{q}_{\alpha}(Y)\leq \max_{R}
\text{q}_{\alpha}(R),
\end{equation*}
where $R$ has pmf $r$ that is a  ray densities of $\mathcal{S}_d(p) $.
%\item Let $k$ be the $\alpha $-quantile of $S_{d}$, i.e. $P(S_{d}\leq
%k)=\alpha $. Then $P(S_{d}\leq k)=\sum_{l=0}^{n_{p}}\lambda
%_{l}\sum_{h=0}^{k}r_{S_{h}}^{i}$.\textbf{pa scrivere coi var}
\end{proposition}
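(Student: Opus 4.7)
The plan is to exploit the fact that, although the quantile is not a linear functional of the pmf, the cumulative distribution function inherits linearity. Writing $p_Y = \sum_{j=1}^{n_p} \lambda_j r_j$ as in \eqref{cone}, with $\lambda_j \geq 0$ and $\sum_j \lambda_j = 1$, summation against the indicator $\mathbf{1}\{\cdot \leq y\}$ gives
\begin{equation*}
F_Y(y) \;=\; \sum_{j=1}^{n_p} \lambda_j F_{R_j}(y), \qquad y \in \mathbb{R}.
\end{equation*}
This is the only structural fact I need, and it reduces the problem to a two-line monotonicity argument on each side.

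For the upper bound, set $y_M := \max_{j} q_{\alpha}(R_j)$. By the definition of the quantile, $F_{R_j}(y_M) \geq \alpha$ for every $j$, so the convex combination satisfies $F_Y(y_M) \geq \alpha$. Hence $y_M$ lies in the set $\{y \in \mathbb{R}: F_Y(y) \geq \alpha\}$, giving $q_{\alpha}(Y) \leq y_M$.

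For the lower bound, set $y_m := \min_{j} q_{\alpha}(R_j)$ and pick any $y < y_m$. Since $y < q_{\alpha}(R_j)$ for every $j$, the definition of the infimum forces $F_{R_j}(y) < \alpha$ for every $j$. Taking the convex combination yields $F_Y(y) < \alpha$, so $y$ is not in $\{z: F_Y(z) \geq \alpha\}$; therefore $q_{\alpha}(Y) \geq y_m$.

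There is no real obstacle here beyond keeping the strict and non-strict inequalities straight at the infimum: the key point is that the quantile's lack of convexity is irrelevant once one recognizes that CDFs — and therefore the level sets used in the quantile's definition — behave linearly under the ray decomposition \eqref{cone}. The same argument would go through verbatim for $\EEE(p)$ via the correspondence \eqref{map1} and for any functional that is monotone with respect to stochastic dominance, which may be worth noting as a remark.
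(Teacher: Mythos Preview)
Your argument is correct. The key observation --- that the cumulative distribution function is linear in the pmf, so the convex decomposition \eqref{cone} transfers directly to $F_Y=\sum_j\lambda_jF_{R_j}$ --- is exactly what makes the quantile bounds fall out despite the quantile itself failing to be affine. Both halves of your inequality chain are clean: for the upper bound you use that the infimum defining $q_\alpha$ is attained on discrete distributions, and for the lower bound you correctly handle the strict inequality $F_{R_j}(y)<\alpha$ for $y<y_m$.

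As for comparison with the paper: note that the present paper does \emph{not} supply its own proof of this proposition. It is stated with attribution to \cite{fontana2020model}, so there is no in-text argument to compare against. Your proof is the standard one for this kind of statement and would serve perfectly well as a self-contained justification here; it is also essentially the argument one expects the cited reference to contain, since linearity of the CDF under mixtures is the only natural route. Your closing remark --- that the same reasoning works for any functional monotone with respect to first-order stochastic dominance --- is a genuine and useful generalization beyond what the paper records.
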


Another famous measure defined on classes of distribution is the entropy. For a discrete pmf $p_Y\in \SSS$ it is given by:
\begin{equation*}
E(p_Y)=\sum_{i=0}^dp_i\log p_i.
\end{equation*}

The entropy  does not satisfy Proposition \ref{hull-phi} and does not reach its bound on the ray densities.
We therefore are not able to use the geometry of a pmf in $\SSS$ to numerically find its distribution across the $\SSS$. However, we can address this goal using simulations.
For other measures for which the exact value of the ratio in \eqref{volfrustum} cannot be computed,
 an estimate of it can be simply obtained by sampling uniformly at random over  $\mathcal{T}_i$ and determining the relative frequency of the points that fall in $\mathcal{R}_{i,t}$
\[
\widehat{\left( \frac{\text{vol}(\mathcal{R}_{i,t})}{\text{vol}(\mathcal{T}_i)} \right)}=\frac{\#\{p_k \in \mathcal{R}_{i,t}, \, k=1,\ldots, N \}}{N}
\]
where $N$ is the size of the sample. In these cases an estimate $\hat{F_\phi}$ of the distribution $F_\phi$ will be obtained.

\subsection{Applications}

We present two applications where we will study different scenarios.
The first application is in dimension $d=3$, the polytope is $2$-dimensional and we can explicitly see the triangularization. The second example is performed in higher dimensions, $d=6$, where the structure of the polytope is more complex.
\subsubsection{Application 1}
We study:
\begin{enumerate}
\item the exact distribution of the second-order moment in $\mathcal{E}_3(0.4)$,  the class of exchangeable distributions of dimension $d=3$ and mean $p=0.4$;
\item  a family of pmfs for simulations that span the whole range of correlation  in $\mathcal{E}_3(0.4)$;
\item the sampling distribution of the entropy in the same class $\mathcal{E}_3(0.4)$;
\item the joint distribution of the first-order moment and  correlation in $\mathcal{E}_3$, the class  of exchangeable distributions of dimension $d=3$.
\end{enumerate}

The ray densities of $\mathcal{E}_3(0.4)$ are the columns in Table \ref{tabray}.

\begin{table}
\begin{center}\caption{Ray density of  $\mathcal{E}_3(0.4)$}
\begin{tabular}{|r |r| r| r| r| } \label{tabray}
$y$&$r_1(y)$ & 	$r_2(y)$ &	$r_3(y)$ &$r_4(y)$ \\
\hline
0&0.4 &	0.6 &	0 &	0 \\
1&0 &	0 &	0.8 &	0.9 \\
2&0.6 &	0 &	0.2 &	0 \\
3&0 &	0.4 &	0 &	0.1 \\
\end{tabular}
\end{center}
\end{table}
The ray densities are points in $\RR^4$ which lie in a subspace of dimension $d-1=3-1=2$. Using standard Principal Component
Analysis we can project these 4 points to $\RR^2$.  The points inside the polygon in Figure \ref{fig01} (left side) represent all the densities which belong to  $\mathcal{S}_3(0.4)\leftrightarrow\mathcal{E}_3(0.4)$.
%
%\begin{figure}
%%\includegraphics[width=\linewidth]{C:\\Users\\roberto\\Dropbox (Politecnico di Torino Staff)\\AnnalseRoyal\\StatCo\\tabelle\\polygone3p40}
%\includegraphics[width=\linewidth]{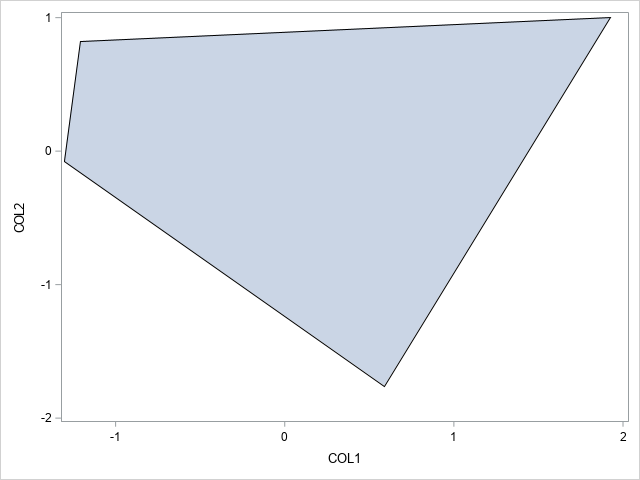}
%  \caption{$\mathcal{E}_3(\frac{2}{5})$}
%  \label{fig01}
%\end{figure}

\begin{figure}[h!]
\caption{$2$-dimensional polytope $\mathcal{E}_3(0.4)$}\centering
\begin{subfigure}[b]{0.3\textwidth}
\includegraphics[width=\textwidth]{polygone3p40.png}
\end{subfigure}
\begin{subfigure}[b]{0.3\textwidth}
\includegraphics[width=\textwidth]{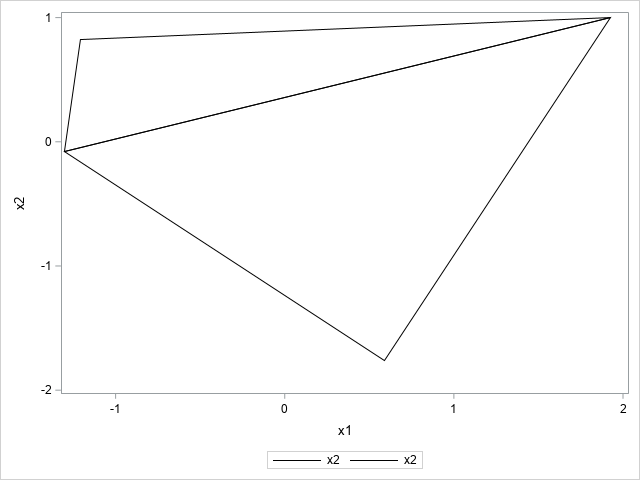}
\end{subfigure}
\label{fig01}
\end{figure}

The right side of Figure \ref{fig01}  shows the triangularization. We have two triangles: $\mathcal{T}_1$, the largest one with area $3.74$  and $\mathcal{T}_2$, the smallest one with area  $1.40$. Then, the sampling probabilities are $P(\mathcal{T}_1)=0.73$ and
$P(\mathcal{T}_2)=0.27$.

For $2$-order moments the region $\mathcal{R}_{i,t}=\{p_Y \in \mathcal{T}_i: \mu_2(Y) \leq t\}$  is  the subset of the standard simplex defined as $\{ (\lambda_j; j\in \mathcal{J^*}) : \lambda_j \geq0, \sum_{j\in \mathcal{J^*}} \lambda_j=1, \sum_{j \in \mathcal{J^*}} \lambda_j \mu_2^{(j)}\leq t\}$. We recall that in this case the ratio of the volumes in Eq. \eqref{volfrustum} can be computed using an exact and iterative formula,  see \cite{varsi1973multidimensional} and \cite{cales2018practical}.

Figure \ref{fig02} (left side) exhibits the exact numerical  cumulative distribution function (cdf) $P(\mu_2(Y) \leq t| \mathcal{T}_1)$ of $\mu_2(Y)$ across $\mathcal{T}_1$. The distribution of $\mu_2(Y)$  across $\mathcal{T}_2$ is similar. The  cdf  $F_{\mu_2}$ of $\mu_2(Y)$ across the whole polytope is obtained   by mixing the conditional cdfs as in \eqref{prob_tot}.

Figure \ref{fig02} - right side - shows the probability density function (pdf) of the mixture obtained from the  cdf of $\mu_2$ by $f_{\mu_2}(t)=\frac{F_{\mu_2}(t+\Delta)-F_{\mu_2}(t)}{\Delta}$, where  $\Delta$ has been chosen equal to  $(\max(\mu_2)-\min(\mu_2))/10000$.

\begin{figure}[h]
\caption{Distribution of the $2$-order moment across $\mathcal{E}_3(0.4)$}\centering
\begin{subfigure}[b]{0.3\textwidth}
\includegraphics[width=\textwidth]{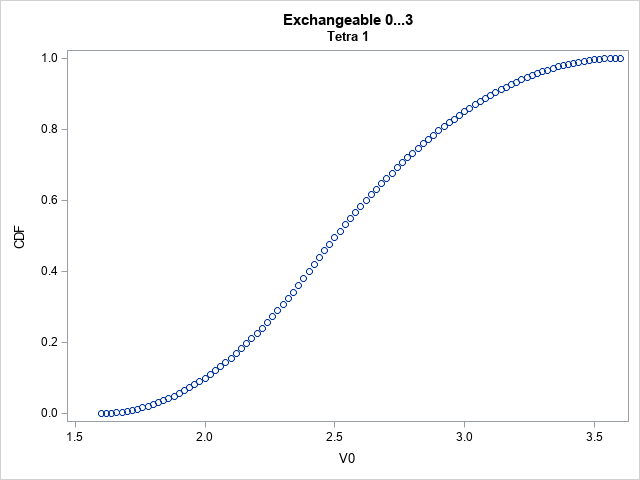}
\end{subfigure}
\begin{subfigure}[b]{0.3\textwidth}
\includegraphics[width=\textwidth]{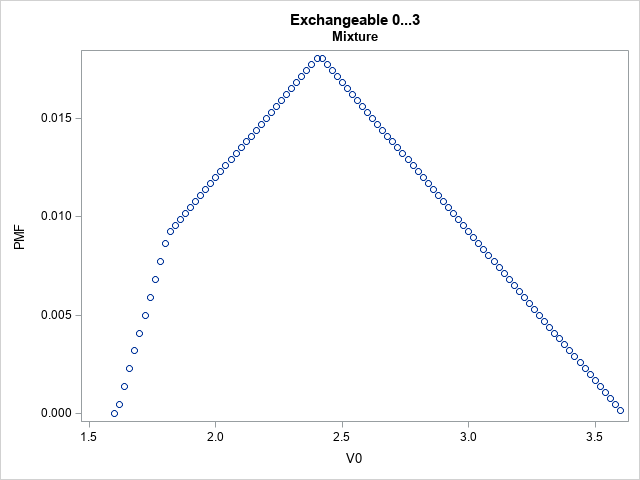}
\end{subfigure}
\label{fig02}
\end{figure}

The pmfs in the family $r_{\lambda}=\lambda r_{\rho_m}+(1-\lambda) r_{\rho_M}$ span then whole correlation range and can be simulated according to the Algorithm proposed, Figure \ref{fig03a} shows the family in the polytope $\EEE(p)$ together with the families of $\beta$-mixture models, with mean $p$ and of one-factor models, given $p$.  It is evident that our approach allows us to consider also pmfs with negative correlations (green straight line in the figure), while the other two approaches provide only positive correlations (blue and red lines in the figure). In dimension three the range of negative correlation  $[-0.39, 0]$ is wide, as evidenced in the figure.

\begin{figure}[h]
\caption{Families of distributions: $r_{\lambda}$ (straight line: yellow line are positive correlations, green line are negative correlations), $\beta$-mixing (red line) and one-factor (blue line) across $\mathcal{E}_3(0.4)$}\centering
\includegraphics[width=0.3\textwidth]{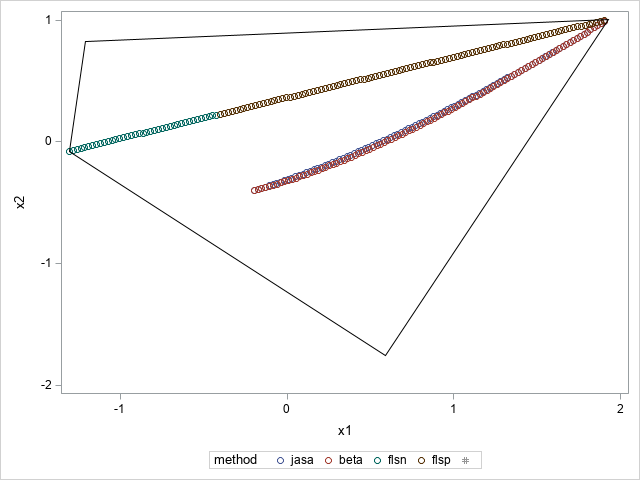}
\label{fig03a}
\end{figure}

The simulated pdf of the entropy in the  class $\mathcal{E}_3(0.4)$, where the entropy of $\XX\in \mathcal{E}_3(0.4)$  is defined to be the entropy of $Y=\sum_{i=1}^dX_i$, is found using the methodology in Section \ref{sec:nonexp} and it is shown in Figure \ref{fig03}.

\begin{figure}[h]
\caption{Empirical distribution of the entropy across $\mathcal{E}_3(0.4)$}\centering
\includegraphics[width=0.3\textwidth]{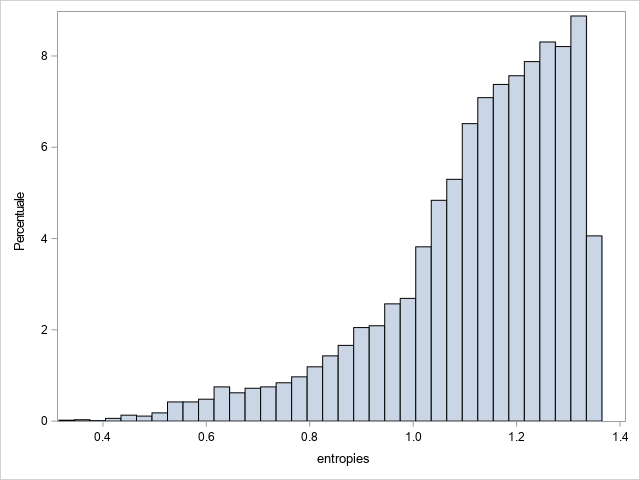}
\label{fig03}
\end{figure}

The simulated pdf can obviously also be found for the second order moment $\mu_2(Y)$. It  is shown in Figure \ref{fig04} for completeness. The simulated pdf is obviously in agrement with the exact numerical one (right side of Figure \ref{fig02}).

\begin{figure}[h]
\caption{Empirical distribution of the second order moment across $\mathcal{E}_3(0.4)$}\centering
\includegraphics[width=0.3\textwidth]{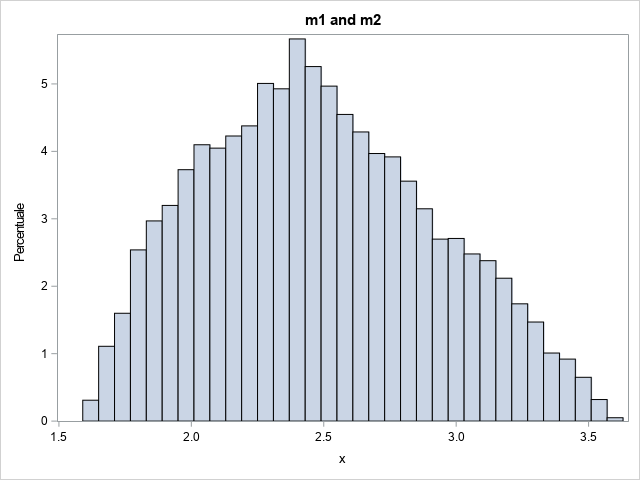}
\label{fig04}
\end{figure}

Figure \ref{fig05} shows the simulated bivariate distribution of the mean $p$ and the correlation $\rho$  across $\mathcal{E}_3$.
The joint behaviour of $p$ and $\rho$ is in accordance with the theoretical bounds found in \cite{fontana2020model} and recalled in \eqref{corint} and  \eqref{cornint}.
In this case, where $d=3$, the minimal correlation is $-0.5$ and it is attained for $p=\frac{1}{3}$ and $p=\frac{2}{3}$.

\begin{figure}[h]
\caption{Bivariate  distribution of the first order moment and correlation  across $\mathcal{E}_3$}\centering
\includegraphics[width=0.4\textwidth]{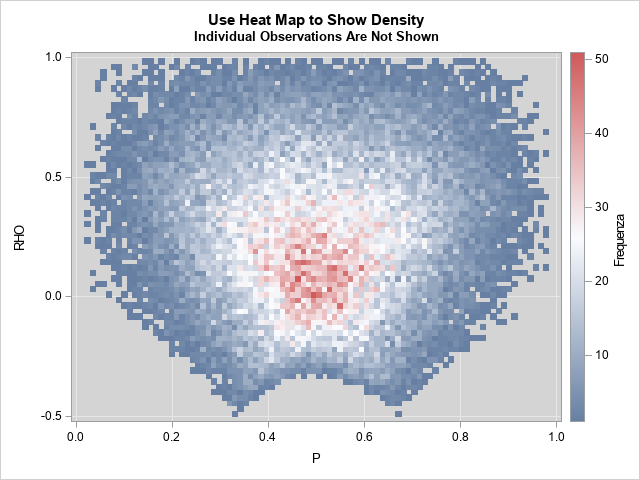}
\label{fig05}
\end{figure}

\subsubsection{Application 2}
We study:
\begin{enumerate}
\item the distribution of the second-order moment in  $\mathcal{E}_6(0.4)$, the class of exchangeable distributions of dimension $d=6$ and mean $p=0.4$;
\item the distribution of the entropy in the same class $\mathcal{E}_6(0.4)$;
%\item the joint distribution of the first-order moment and the entropy in the class of exchangeable function of dimension $d=6$, $\mathcal{E}_6$.
\end{enumerate}

The ray densities of $\mathcal{E}_6(0.4)$ are 12 and are given in Table \ref{tabray6}.

\begin{table}
\begin{center}\caption{Ray densities of  $\mathcal{E}_6(0.4)$}
\begin{tabular}{|r |r| r| r| r|r|r|r|r|r|r|r|r|} \label{tabray6}
$y$&$r_1(y)$ & 	$r_2(y)$ &	$r_3(y)$ & $r_4(y)$ &$r_5(y)$ & $r_6(y)$&$r_7(y)$& $r_8(y)$&$r_9(y)$&$r_{10}(y)$ & $r_{11}(y)$ & $r_{12}(y)$ \\
\hline
0&0.2 &	0.4 &	0.52 &	0.6 &	0 &	0 &	0 &	0 &	0 &	0 &	0 &	0 \\
1&0 &	0 &	0 &	0 &	0.3 &	0.533 &	0.65 &	0.72 &	0 &	0 &	0 &	0 \\
2&0 &	0 &	0 &	0 &	0 &	0 &	0 &	0 &	0.6 &	0.8 &	0.867 &	0.9 \\
3&0.8 &	0 &	0 &	0 &	0.7 &	0 &	0 &	0 &	0.4 &	0 &	0 &	0 \\
4&0 &	0.6 &	0 &	0 &	0 &	0.467 &	0 &	0 &	0 &	0.2 &	0 &	0 \\
5&0 &	0 &	0.48 &	0 &	0 &	0 &	0.35 &	0 &	0 &	0 &	0.133 &	0 \\
6&0 &	0 &	0 &	0.4 &	0 &	0 &	0 &	0.28 &	0 &	0 &	0 &	0.1 \\

\end{tabular}
\end{center}
\end{table}

The ray densities are points in $\RR^7$ which lie in a subspace of dimension $d-1=6-1=5$. Using standard Principal Component
Analysis we can project these 12  points to $\RR^5$.
We have 38 tetrahedra,   28 of which have almost zero volume.

We proceed as in the previous application to find the $2$-order moment distribution across the polytope. We first compute its distribution across each tetrahedron  using an exact and iterative formula and then we  mix the conditional cdfs as in \eqref{prob_tot}.
 Figure \ref{fig6-1}, left side,    shows the pdf of the mixture obtained from the exact numerical  cdf of $\mu_2$.

\begin{figure}[h]
\caption{Distribution of the $2$-order moment across $\mathcal{E}_6(0.4)$ (left side) and empirical distribution of the entropy across $\mathcal{E}_6(0.4)$ (right side)}\centering
\begin{subfigure}[b]{0.3\textwidth}
\includegraphics[width=\textwidth]{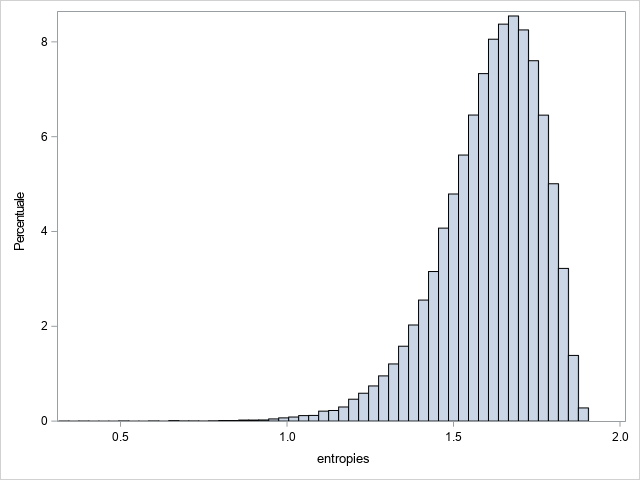}
\end{subfigure}
\begin{subfigure}[b]{0.3\textwidth}
\includegraphics[width=\textwidth]{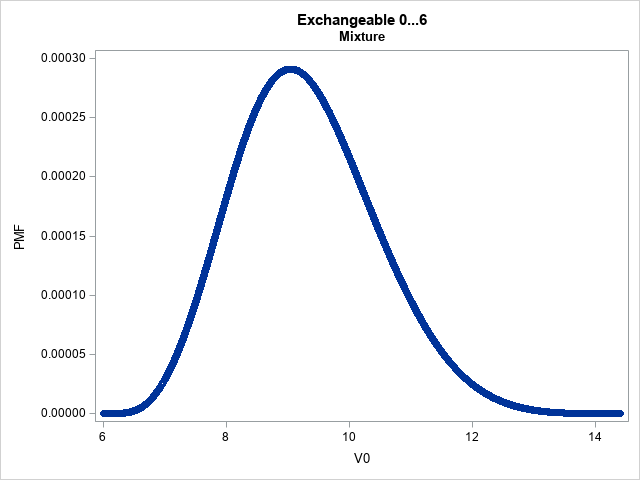}
\end{subfigure}
\label{fig6-1}
\end{figure}

The simulated pdf of the entropy in the  class $\mathcal{E}_6(0.4)$ is found using the methodology in Section \ref{sec:nonexp} and it is shown in the right side of  Figure \ref{fig6-1}.

%
%
%\begin{figure}[h]
%\caption{}\centering
%\includegraphics[width=0.3\textwidth]{entropy-hist_d_6.png}
%\label{fig03}
%\end{figure}
%
%We conclude with the joint editribution of the $2$-order moement and the correlation across $\mathcal{E}_6$.
%
%
%\begin{figure}[h]
%\caption{Bivariate  distribution of the first order moment and correlation  across $\mathcal{E}_3$}\centering
%\includegraphics[width=0.3\textwidth]{bivariato-p-e-rho.png}
%\label{fig04}
%\end{figure}
%

\section{Estimate and testing}\label{E-T}

\subsection{Maximum likelihood estimation}

We focus on the maximum likelihood estimation  in the classes of exchangeable distributions and exchangeable distributions with given margins, $\EEE$ and $\EEE(p)$ respectively.

\begin{proposition}

A  maximum likelihood estimator of $\ff\in \EEE$ ($\EEE(p)$) always exists.
\end{proposition}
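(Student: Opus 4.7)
The plan is to apply the extreme value theorem to the likelihood function, using the polytope structure of $\EEE$ and $\EEE(p)$ established earlier.

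First I would rewrite the likelihood in terms of $\ff=(f_0,\ldots,f_d)$. Given an i.i.d.\ sample $\xx_1,\ldots,\xx_n$, exchangeability gives $f(\xx_i)=f_{k_i}$ where $k_i=\#\{j:x_{ij}=1\}$, so the likelihood depends on the data only through the counts $n_k=\#\{i:k_i=k\}$:
\begin{equation*}
L(\ff)=\prod_{k=0}^{d} f_k^{\,n_k}.
\end{equation*}
This is a polynomial, hence continuous, function of $\ff\in\RR^{d+1}$.

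Next I would argue compactness of the parameter space. Under the identification from Section \ref{not}, $\EEE$ is the $d$-simplex $\{\ff:f_k\geq 0,\ \sum_{k=0}^d\binom{d}{k}f_k=1\}$, and $\EEE(p)$ is this simplex intersected with the affine hyperplane $\{\sum_{k=0}^d k\binom{d}{k}f_k=dp\}$. In both cases the set is closed and bounded in $\RR^{d+1}$, hence compact. Since $L$ is continuous on a compact set, the extreme value theorem immediately yields existence of a maximizer.

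The only subtlety I would need to address is that the maximum is actually positive, so that the maximizer is a bona-fide MLE and not a spurious zero. For $\EEE$, the symmetrized empirical distribution lies in $\EEE$ and assigns positive probability to every observed $k$, so $L>0$ there. For $\EEE(p)$, I would take any strictly convex combination $\ff_0=\sum_{j=1}^{n_p}\lambda_j\ee_j$ with all $\lambda_j>0$ of the extremal rays from Proposition \ref{binu}; the support analysis there shows that, as $(j_1,j_2)$ ranges over all admissible pairs (and together with $r_{pd}$ in the integer case), the supports cover $\{0,\ldots,d\}$. Hence $\ff_0$ has $f_k>0$ for every $k$, giving $L(\ff_0)>0$.

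The bulk of the argument is really just compactness plus continuity, which is routine; the only non-trivial point is the construction of an interior element of $\EEE(p)$ ensuring a positive likelihood, and this is handed to us for free by the explicit extremal description in Proposition \ref{binu}.
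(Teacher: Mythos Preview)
Your proof is correct and follows exactly the paper's route: the parameter space is compact and the likelihood continuous, so the extreme value theorem gives a maximizer. You are in fact more careful than the paper---which just asserts ``closed convex, hence compact'' without mentioning boundedness---and your extra positivity argument (exhibiting an interior point of $\EEE(p)$ via a strictly positive combination of the rays from Proposition~\ref{binu}) is a welcome addition that the paper omits entirely.
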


\begin{proof}
$\EEE$ ($\EEE(p)$)  is a closed convex sets in $\RR^d$, hence it is compact and the likelihood functions for the  models in \eqref{L1} are continuous.
\end{proof}

The maximum likelihood estimator (MLE) in the class $\EEE$ can be found analytically using the map in \eqref{map0}.

Let us assume to observe a sample of size $n$ drawn from a $d$-dimensional Bernoulli distribution $\XX$ and let $Y=\sum_{i=1}^dX_i$ have pmf  $\pp_Y=(p_0, \ldots, p_d)$ that gives rise to counts  $\boldsymbol{N}$.
The count $\NNN=(N_0, \ldots, N_d)$
has a multinomial distribution with parameters $n, \pp$, i.e. $\NNN\sim \text{Multinomial}(n, \pp)$, where the parameter $\pp$ belongs to the  $d$-simplex  $\Delta_{d}$.
The likelihood function  is
\begin{equation}
L(\boldsymbol{n}; \pp)=P(N_i=n_i, i\in I)=\binom n {n_0\cdots n_d} \prod_{j=0}^d(p_j)^{n_j},
\end{equation}
where we set $0^0:=1$. The MLE  is the solution of the constrained maximization problem

 \begin{equation}
\begin{split}
&max_{\pp}\log L(\boldsymbol{n}; \pp), \\ &\,\,\,\,\,\,\,\,\,\,\, \text{sub} \\
&\sum_{i=0}^dp_j-1=0\\
\end{split}
\end{equation}
By using  the Lagrange multipliers we find:
$$\hat{p_j}_1=\frac{N_j}{n}.$$
The MLE in the class $\EEE$  is
$$\hat{f_j}=\frac{\hat{p_j}}{\binom d {j}}=\frac{\frac{N_j}{n}}{\binom d {j}},\,\,\, i=0,\ldots, d$$

We now consider the class $\EEE(p)$. The MLE estimator in $\mathcal{D}_d(dp)$ can be numerically found  by  solving the constrained maximization problem:

 \begin{equation}
\begin{split}
&max_{\pp}\log L(\boldsymbol{n}; \pp), \\ &\,\,\,\,\,\,\,\,\,\,\, \text{sub} \\
&\sum_{i=0}^dp_j-1=0\\
&\sum_{i=0}^djp_j-pd=0,\\
\end{split}
\end{equation}
Therefore to have the MLE in $\EEE(p)$ we use the map $H$ again.

We can also use a direct approach and look for the MLE  in $\EEE(p)$.
Let us assume to observe a sample of size $n$ drawn from a $d$-dimensional Bernoulli distribution $\XX$ with pmf  $(f(\boldsymbol{x}):\boldsymbol{x}\in\mathcal{X}_d)$ that gives rise to counts  $\NNN=(N_1,\ldots N_m)$, where
 $m=2^d$.  Then the count $\NNN$ has a multinomial distribution with parameters $n, \ff^{\chi}$, i.e. $\NNN\sim \text{Multinomial}(n, \ff^{\chi})$, where the parameter $\fff=(f^{\chi}_i: i=1,\ldots, m):=(f(\boldsymbol{x}):\boldsymbol{x}\in\mathcal{X}_d)$ belongs to the  $m-1$-simplex  $\Delta_{m-1}$.
The likelihood function  is
\begin{equation}
L(\boldsymbol{n}; \fff)=P(N_i=n_i, i\in I)=\binom n {n_1\cdots n_m} \prod_{j=1}^m(f^{\chi}_j)^{n_j},
\end{equation}
where we set $0^0:=1$.
%The maximum likelihood estimator (MLE) in the simplex $\mathcal{B}_d$ of $d$-dimensional Bernoulli distributions can be easily found and it is $\hat{\fff}=(\frac{N_1}{n},\ldots,\frac{
%N_m}{n})$.
%
%We focus on the MLE in the classes of exchangeable distributions and exchangeable distributions with given margins, $\EEE$ and $\EEE(p)$ respectively. Let $\EEE^*$ be one of the classes $\EEE$ or $\EEE(p)$.
%

If we assume that $\XX\in\EEE(p)$, then $\fff=\fff(\lambda)$ has the form:
\begin{equation}
\fff=\sum_{i=1}^{k}\lambda_i\ee^{\chi}_i,
\end{equation}
where $\ee^{\chi}_i=(e(\boldsymbol{x}):\boldsymbol{x}\in\mathcal{X}_d)$ are the extremal points of $\EEE(p)$ and $n_p$ is their number. The likelihood function of the count $\NNN$  is
\begin{equation}\label{L1}
L(\boldsymbol{n}; \ff^{\chi})=\binom n {n_1\cdots n_m} \prod_{j=1}^m(\sum_{i=1}^{n_p}\lambda_ie^{\chi}_{ij})^{n_j}
\end{equation}
 where $\lambda=(\lambda_1,\ldots, \lambda_{n_p})\in \Delta_{n_p}$. The MLE  can be found by maximizing the log-likelihood function in the simplex.
%
%
%\begin{remark}
%Notice that when the dimension increases we can greatly reduce the computational effort by using the one-to-one correspondence between $\mathcal{E}_d(p)$ and $\mathcal{S}_d(p)$ and look for a MLE estimator in $\mathcal{S}_d(p)$. Thank to
% the map $H$ in
%\eqref{map0} which provides a one to one relationship between the ray densities of the two classes, the MLE estimates $\hat{\lambda}_i, i=1,\ldots, k$ are the same in the two spaces. Actually,  we can find the MLE stimator in
%$\mathcal{S}_d(p)$  and, through the map $H$, the MLE estimator in $\mathcal{E}_d(p)$.
%
%
%\end{remark}

\begin{example}\label{ex12}
Let $\XX\in \mathcal{E}_2(1/2)$. We have two ray densities: the upper and lower Fr\'echet  bound (\cite{fontana2018representation}) $\ee_U$ and $\ee_L$. The count $\NNN$ has support on four points and the likelihood becomes:

\begin{equation}
L(\boldsymbol{n}; \ff^{\chi})=\binom n {n_1\cdots n_4} \prod_{j=1}^4(\lambda_1e_{Uj}+\lambda_2e_{Lj})^{n_j},  \lambda\in \Delta_2.
\end{equation}
By standard computations we find

\begin{equation}
L(\boldsymbol{n}; \ff^{\chi})=\binom n {n_1\cdots n_4} (\frac{\lambda_1}{2})^{n_1+n_4} (\frac{\lambda_2}{2})^{n_2+n_3},  \lambda\in \Delta_2.
\end{equation}

The MLE  can be found using the log-likelihood and the Lagrange multipliers and it is:

$$\hat{\lambda}_1=\frac{N_1+N_4}{n},\,\,\,\,\,\, \hat{\lambda}_2=\frac{N_2+N_3}{n}.$$
The same result has been found in \cite{marchetti2016palindromic} for the palindromic  Bernoulli distributions, that in the 2-dimensional case coincide with  the whole Fr\'echet class $\EEE(1/2)$.
\end{example}
%
%\begin{remark}
%Another way to find MLE estimator is to solve the constrained maximization problem
%
%\begin{equation}
%\max_{\ff^{\chi}}\binom n {n_1\cdots n_m} \prod_{j=1}^m(f^{\chi}_j)^{n_j},
%\end{equation}
%under the linear constraints on $\fff$ to impose exchangeability and moments.
%
%\end{remark}
\subsection{Testing}\label{Test}
Let $\EEE^*$ be one of the classes $\EEE$ or $\EEE(p)$.
This section provides a generalized likelihood ratio (GLR) test for $$H_0: \fff\in \EEE^*$$ versus $$H_A: \fff\in \mathcal{B}_d\setminus\EEE^*,$$
where in this case the class $\EEE^*$ is a $d$-simplex o a $d$-polytope and $\mathcal{B}_d$ is a $2^d-1$-simplex.

Let  $\hat{\fff}=(\frac{N_1}{n},\ldots,\frac{N_m}{n})$ be the MLE estimator for
$\NNN\sim \text{Multinomial}(\boldsymbol{n}, \fff)$, where the parameter $\fff=(f^{\chi}_i: i=1,\ldots, m):=(f(\boldsymbol{x}):\boldsymbol{x}\in\mathcal{X}_d)$ belongs to the  $m-1$-simplex  $\Delta_{m-1}$
and let $\hat{\lambda}$ be the MLE estimator for  $\XX\in\EEE^*$ with pmf $\fff=\fff(\lambda)$ as determined in the previous section. The GLR statistics is

\begin{equation}\label{statTest}
\Lambda(\NNN)=\frac{\prod_{i=1}^m(f^{\chi})_j(\hat{\lambda})^{N_j}}{\prod_{i=1}^m(\frac{N_j}{n})^{N_j}},
\end{equation}
where $\NNN$ is the count arised from $\XX$.
The $\alpha$-level critical region is defined by
\begin{equation}
\alpha=P_0(\Lambda(\NNN)<c),
\end{equation}
where $P_0$ is the probability measure under $H_0$. The value  $c$ is obtained observing that $-2\log(\Lambda(\NNN))$ is approximatively a $\chi^2_k$ distribution with $k=m-1-\dim(\mathcal{E}^*_d)$, where $\dim(\mathcal{E}^*_d)=d$ if $\mathcal{E}^*_d=\mathcal{E}_d$ and $\dim(\mathcal{E}^*)=d-1$ if $\mathcal{E}^*_d=\mathcal{E}_d(p)$.
%The class $\EEE$ is defined by the exchangeability conditions that are $2^d-d-1$ plus the usual  condition for $\ff^{\chi}$ being a legitimate pmf.
%The degree of fredoom for $\EEE$ is $k=2^d-(2^d-d)=d$ and  for $\EEE(p)$ is $k=2^d-(2^d-d+1)=d-1$, i.e $k=2^2-1-1=2$ for $\mathcal{E}_2$ and  $k=2^2-1-2=1$ for $\mathcal{E}_2(p)$. %See \cite{} for  further details on multinomial models tests.
\begin{example}
Consider the MLE  of $\XX\in \mathcal{E}_2(1/2)$ in Example \ref{ex12}. The test statistics for   $$H_0: \fff\in \mathcal{E}_2(1/2)$$ versus $$H_A: \fff\in \mathcal{B}_2\setminus\mathcal{E}_2(1/2),$$
is

\begin{equation}
\Lambda(\NNN)=\frac{\big(\frac{N_1+N_4}{2n}\big)^{N_1+N_4}\big(\frac{N_2+N_3}{2n}\big)^{Y_2+Y_3}}{\prod_{i=1}^4\frac{N_j}{n}^{N_j}},
\end{equation}
and $-2\log(\Lambda(\NNN))$ has  approximatively a $\chi^2_2$ distribution. If we consider $\alpha=0.10$ and $c_1=5.991$ the upper $0.95$ quantile of a $\chi_1^2$ distribution the critical region is defined by $c=e^{-2c_1}$.
\end{example}

\subsection{Application}

Over the Spring 2009 semester, two Berkeley undergraduates  undertook  40,000 tosses of a coin. The dataset and the description of the protocol which followed are available at \url{https://www.stat.berkeley.edu/~aldous/Real-World/coin_tosses.html}.
Here, we rearrange this dataset as if the tosses had been undertaken five at a time and we use this dataset to find  the  MLE  in the class $\mathcal{E}_5(\frac{1}{2})$.  After finding the MLE  in $\mathcal{E}_5(\frac{1}{2})$, we perform the GLR test described in Section \ref{Test}.

To simplify the computations  we look for the ML estimates in  $\mathcal{S}_5(\frac{1}{2})$. We have nine ray densities, provided in Table \ref{tabMLE}. The MLE estimate
$\hat{\boldsymbol{\lambda}}=(\lambda_i, \, i=1,\ldots, 9)$  which is also the ML estimate in  $\mathcal{E}_5(\frac{1}{2})$  is
\begin{equation}\label{lambda}\hat{\boldsymbol{\lambda}}=(0.1,	0.019,0.015,0.188,0.202,0.008,0.173,0.174,0.121).\end{equation}  For completeness we also exhibit the estimated ML pmf $p_{MLE}$ in  $\mathcal{S}_5(\frac{1}{2})$ in the last column of Table \ref{tabMLE}.
%
%\begin{table}
%\begin{footnotesize}
%\begin{center}\caption{Ray density of  $\mathcal{S}_5(\frac{1}{2} )\equiv \mathcal{E}_5(\frac{1}{2})$ and estimated MLE pmf}
%\begin{tabular}{|r |r| r| r| r|r|r|r|r|r|r| } \label{tabMLEest}				
%0.1&0.019,0.015,0.188,0.202,0.008,0.173,0.174,0.121)$
\begin{table}
\begin{footnotesize}
\begin{center}\caption{Ray density of  $\mathcal{S}_5(\frac{1}{2} )\equiv \mathcal{E}_5(\frac{1}{2})$ and estimated MLE pmf}
\begin{tabular}{|r |r| r| r| r|r|r|r|r|r|r| } \label{tabMLE}
$y$&$r_1(y)$ & 	$r_2(y)$ &	$r_3(y)$ & $r_4(y)$ &$r_5(y)$ & $r_6(y)$&$r_7(y)$& $r_8(y)$&$r_9(y)$ & $p_{MLE}(y)$\\
\hline
0&0.167 &	0.375 &	0.5 &	0 &	0 &	0 &	0 &	0 &	0 &0.031\\
1&0 &	0 &	0 &	0.25 &	0.5 &	0.625 &	0 &	0 &	0 &0.153\\
2&0 &	0 &	0 &	0 &	0 &	0 &	0.5 &	0.75 &	0.833& 0.318\\
3&0.833 &	0 &	0 &	0.75 &	0 &	0 &	0.5 &	0 &	0 &0.311\\
4&0 &	0.625 &	0 &	0 &	0.5 &	0 &	0 &	0.25 &	0&0.156 \\
5&0 &	0 &	0.5 &	0 &	0 &	0.375 &	0 &	0 &	0.167& 0.031\\
\end{tabular}
\end{center}
\end{footnotesize}
\end{table}

We now perform  the GLR test for $$H_0: \fff\in \mathcal{E}_5(\frac{1}{2})$$ versus $$H_A: \fff\in \mathcal{B}_d\setminus\mathcal{E}_5(\frac{1}{2}).$$

Let  $\hat{\fff}=(\frac{N_1}{40000},\ldots,\frac{N_{32}}{40000})$ be the MLE  for
$\NNN\sim \text{Multinomial}(40000, \fff)$
and let $\hat{\boldsymbol{\lambda}}$ be the MLE estimator for  $\XX\in\mathcal{E}_5(\frac{1}{2})$ provided in \eqref{lambda}. The GLR statistics is $\Lambda(\NNN)$  in \eqref{statTest}. Since $-2\log(\Lambda(\NNN))$ is approximatively a $\chi^2_k$ distribution with $k=27$ degree of freedom,  its observed value is $39.49$ and $\chi^2_{0.95}=40.113$ we do not reject the null hypotesis at level $0.05$.

\section{Further developements}\label{P-exch}
This section shows that the geometrical structure of exchangeable Bernoulli pmf holds in a more general framework, partial exchangeability. We also show that as well as exchangeable pmf are in a one to one relationship with discrete distributions, partially exchangeable pmf are in a one to one  relationship with multivariate discrete distributions.  The results we present here open the way to the study of this more general class of multivariate Bernoulli pmf.

\begin{definition}
Let $\mathcal{G}$ be a partition of $I=\{1,\ldots, d\}$. A multivariate Bernoulli distribution $f(\xx)$ is partially exchangeable if $f(\sigma(\xx))=f(\xx)$ for any $\sigma\in \mathcal{P}_d$ such that $\sigma(G)=G$ for any $G\in \mathcal{G}$. We say that $\sigma$ and $f(\xx)$ are compatible with $\mathcal{G}$. We denote by $\mathcal{P}_d(\mathcal{G})$ the set of partitions compatible with $\GGG$ and $\EEE(\mathcal{G})$ the family of partially exchangeable distributions compatible with $\mathcal{G}$.
\end{definition}

Partial exchangeability is an extension of exchangeability, that is recovered by choosing the trivial partition $\GGG=\{I\}$.

Let   $\mathcal{D}_{d_1, \ldots, d_n}$ be the class of multivariate discrete distributions with support on $J_1\times\ldots\times J_n$ and  $J_k=\{0,\ldots, d_k\}$  and  $\mathcal{D}_{d_1, \ldots, d_n}(\mmu)=\mathcal{D}_{d_1, \ldots, d_n}(\mu_1,\ldots,\mu_n)$ the class of multivariate discrete distributions with support on $J_1\times\ldots\times J_n$ and mean vector $\mmu=(\mu_1, \ldots, \mu_n)$.

\begin{theorem}\label{Thm}
Let  $\GGG=\{G_1, \ldots, G_n\}$, and $d_j=\#G_j$.
There  is a one to one map $F_{\GGG}$ between $\EEE(\GGG)$ and $\mathcal{D}_{d_1, \ldots, d_n}$.

\end{theorem}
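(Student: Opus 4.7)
The plan is to generalize the map $H$ of equation \eqref{map0} from the single-block (exchangeable) case to the $n$-block partitioned case. For $\xx=(x_1,\ldots,x_d)\in\CC_d$ and each block $G_j$, let $y_j(\xx)=\sum_{i\in G_j}x_i$, so $\yy(\xx):=(y_1(\xx),\ldots,y_n(\xx))$ takes values in $J_1\times\cdots\times J_n$ with $J_k=\{0,\ldots,d_k\}$. The first observation is that partial exchangeability compatible with $\GGG$ is exactly the statement that $f(\xx)$ depends on $\xx$ only through $\yy(\xx)$: indeed, two vectors $\xx,\xx'$ with $\yy(\xx)=\yy(\xx')$ are related by a $\GGG$-block-preserving permutation, and conversely such permutations preserve block counts.

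Given this, I would define the candidate map
\begin{equation*}
F_{\GGG}:\EEE(\GGG)\longrightarrow \mathcal{D}_{d_1,\ldots,d_n},\qquad F_{\GGG}(f)(\yy):=\binom{d_1}{y_1}\cdots\binom{d_n}{y_n}\,f_{\yy},
\end{equation*}
where $f_{\yy}$ denotes the common value of $f(\xx)$ over all $\xx$ with $\yy(\xx)=\yy$. To check well-definedness, $F_{\GGG}(f)$ is clearly nonnegative, and the number of binary vectors with block-count vector $\yy$ is exactly $\prod_{j=1}^n\binom{d_j}{y_j}$, so
\begin{equation*}
\sum_{\yy\in J_1\times\cdots\times J_n}F_{\GGG}(f)(\yy)=\sum_{\yy}\prod_{j=1}^n\binom{d_j}{y_j}f_{\yy}=\sum_{\xx\in\CC_d}f(\xx)=1,
\end{equation*}
so $F_{\GGG}(f)\in\mathcal{D}_{d_1,\ldots,d_n}$.

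For the bijection, I would explicitly exhibit the inverse. Given any $p\in\mathcal{D}_{d_1,\ldots,d_n}$, set
\begin{equation*}
f(\xx):=\frac{p(\yy(\xx))}{\prod_{j=1}^n\binom{d_j}{y_j(\xx)}}.
\end{equation*}
By construction $f$ depends on $\xx$ only through $\yy(\xx)$, hence lies in $\EEE(\GGG)$; the same combinatorial identity shows $\sum_{\xx}f(\xx)=\sum_{\yy}p(\yy)=1$; and one verifies directly that this assignment inverts $F_{\GGG}$ on both sides. Injectivity of $F_{\GGG}$ also follows immediately, since $F_{\GGG}(f)(\yy)$ and the multinomial coefficient $\prod_j\binom{d_j}{y_j}$ together recover $f_{\yy}$.

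The proof is essentially combinatorial bookkeeping: the only nontrivial point is the equivalence between partial exchangeability and dependence on block counts only, which reduces to the orbit structure of $\mathcal{P}_d(\GGG)$ acting on $\CC_d$. The main ``obstacle'' is really just notational — organising the indexing so that the block-wise generalisation of $H$ is transparent — and once the counting identity for binary vectors with prescribed block counts is in hand, the rest is routine.
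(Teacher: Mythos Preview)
Your proof is correct and follows essentially the same construction as the paper: define the block-count map $\yy(\xx)$, observe that partial exchangeability means $f$ depends only on $\yy(\xx)$, and set $F_{\GGG}(f)(\yy)=\prod_j\binom{d_j}{y_j}f_{\yy}$. If anything, your argument is more complete than the paper's, which simply writes down the map and asserts it is a bijection; you additionally verify that $F_{\GGG}(f)$ is a genuine pmf via the counting identity and exhibit the explicit inverse.
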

\begin{proof}
Let $f\in \EEE(\mathcal{G})$. Since $f(\boldsymbol{x}%
)=f(\sigma (\boldsymbol{x}))$ for any $\sigma \in \mathcal{P}_d(\GGG)$, any
mass function $f(\xx)$ in $\mathcal{E}_d(\GGG)$ uniquely defines a function $g: J_1\times\ldots\times J_n\rightarrow$, where $J_k=\{0,\ldots, d_k\}$ and $d_k=\#G_k$ given by $
g(j_1, \ldots, j_n):=f(\boldsymbol{x})$ if $\boldsymbol{x}=(x_{1},\ldots ,x_{d})\in
\mathcal{X}_d$ and $\#\{x_h\in G_i:x_h=1\}=j_i, \,\,\, i=1,\ldots n$.
The map
\begin{equation}  \label{map2}
\begin{split}
F_{\GGG}: \mathcal{E}_d(\GGG)&\rightarrow \mathcal{D}_{d_1, \ldots, d_n} \\
f &\rightarrow p_D.
\end{split}%
\end{equation}
where $p_D(j_1, \ldots, j_n)={\binom{d_1}{j_1}}\ldots{\binom{d_n}{j_n}}g(j_1,\ldots, j_n)$ is a bijection.

\end{proof}
Notice that if $\XX$ is partially exchangeable, each $d_j$-dimensional margin of the form $(X_{i})_{i\in G_j}$ is a vector of exchangeable Bernoulli variables.

\begin{remark}
Let $\mathcal{S}(\GGG)$  be the class of random variables $\YY=(Y_1, \ldots, Y_n)$ defined by:
\begin{equation}
Y_j=\sum_{h\in G_j}X_h,
\end{equation}
then $p_{\YY}(j_1,\ldots, j_n)=p_D(j_1, \ldots, j_n)$. Thus,  $\mathcal{S}(\GGG)=\mathcal{D}_{d_1, \ldots, d_n}.$

\end{remark}

\begin{corollary}
The class $\EEE(\GGG)$ is a $d_{\GGG}$-simplex, where  $d_{\GGG}=(d_1+1)\times\ldots\times (d_n+1)-1$.
The class $\EEE(\GGG)(\mmu)$ of partially exchangeable distributions compatible with $\GGG$ and set of moments $\mmu$  is a $d_{\GGG}$-polytope, where  $d_{\GGG}=(d_1+1)\times\ldots\times (d_n+1)-1$.
\end{corollary}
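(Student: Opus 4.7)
The plan is to push both statements through the bijection $F_{\GGG}$ constructed in Theorem \ref{Thm}. First I would check that $F_{\GGG}$ is affine. Unpacking the construction, $g(j_1,\ldots,j_n)$ is obtained from $f$ by evaluating $f$ at any $\xx\in\chi_d$ with block composition $(j_1,\ldots,j_n)$ (well-defined by partial exchangeability), and then $p_D(j_1,\ldots,j_n)=\binom{d_1}{j_1}\cdots\binom{d_n}{j_n}\,g(j_1,\ldots,j_n)$. Both operations are linear in $f$, so $F_{\GGG}$ is the restriction to $\EEE(\GGG)$ of a linear isomorphism between the affine hulls. Hence $F_{\GGG}$ preserves convex combinations, extremal points, and the face lattice.

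For the first assertion, the target space $\mathcal{D}_{d_1,\ldots,d_n}$ is the set of probability mass functions on the finite product set $J_1\times\cdots\times J_n$, whose cardinality is exactly $N:=(d_1+1)\cdots(d_n+1)=d_{\GGG}+1$. This is the standard $d_{\GGG}$-simplex, with the $N$ Dirac masses as its extremal points. Pulling back by $F_{\GGG}^{-1}$ identifies the extremal pmfs of $\EEE(\GGG)$ explicitly: for each block composition $(j_1,\ldots,j_n)$, the corresponding extremal pmf assigns equal mass $1/\bigl(\binom{d_1}{j_1}\cdots\binom{d_n}{j_n}\bigr)$ to every $\xx$ with that composition, and zero elsewhere. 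Affine isomorphisms send simplices to simplices, so $\EEE(\GGG)$ is a $d_{\GGG}$-simplex.

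For the second assertion, I would translate the moment condition through $F_{\GGG}$. Prescribing $\mmu=(\mu_1,\ldots,\mu_n)$ in $\EEE(\GGG)$ is, block by block and by exchangeability within each $G_j$, equivalent to prescribing the marginal means of $\YY=(Y_1,\ldots,Y_n)=\bigl(\sum_{h\in G_j}X_h\bigr)_{j=1}^n$, namely $\expval[Y_j]=d_j\mu_j$. These are $n$ linear equations in the coordinates of $p_D$ on top of the single normalization constraint. Consequently $F_{\GGG}\bigl(\EEE(\GGG)(\mmu)\bigr)$ is the intersection of the standard $d_{\GGG}$-simplex with an affine subspace, which is a bounded convex polyhedron, i.e.\ a polytope sitting inside the $d_{\GGG}$-simplex. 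Pulling back by the affine isomorphism $F_{\GGG}^{-1}$ gives that $\EEE(\GGG)(\mmu)$ is a $d_{\GGG}$-polytope.

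The only place where one has to be a bit careful is the bookkeeping for $F_{\GGG}$: one must check that it really is affinely linear (not merely bijective) so that the simplex and polytope structures transfer. Once the linearity is in hand, both statements are immediate from the fact that $\mathcal{D}_{d_1,\ldots,d_n}$ is a standard simplex and that moment conditions are linear constraints; no further combinatorial work, such as identifying the extremal rays of the constrained polytope (which would be the analogue of Proposition \ref{binu} in the partially exchangeable setting), is needed for the statement itself.
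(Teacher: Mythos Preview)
Your proposal is correct and matches the paper's approach: the corollary is stated without proof, so the intended argument is precisely to transfer the simplex and polytope structure from $\mathcal{D}_{d_1,\ldots,d_n}$ through the bijection $F_{\GGG}$ of Theorem~\ref{Thm}. Your explicit check that $F_{\GGG}$ is affine (not merely bijective) is the one detail the paper leaves implicit and is exactly what is needed for the structures to transfer.
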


This last result implies that all the analysis performed in the previous sections can be extended to partially exchangeable distributions.

\begin{example}\label{ex-pair}
Let $\XX\in \mathcal{E}_4(\GGG)$, where $\GGG=\{\{1,2\},\{3,4\}\}$. Let  $\YY=(Y_1, Y_2)$ defined by
\begin{equation}
Y_1=X_1+X_2, \,\,\, Y_2=X_3+X_4.
\end{equation}
$\YY\in \mathcal{D}_{2, 2}$ and $p_S(j_1, j_2)={\binom{2}{j_1}}{\binom{2}{j_2}}f(j_1, j_2)$, $(j_1, j_2)\in J_1\times J_2$. Therefore the vector  $\pp_Y=(p_Y(j_1, j_2))_{j_1, j_2\in J_1\times J_2}$   is a point in $\RR^9$ and $ \mathcal{E}_4(\GGG)$ is a 8-simplex in $\RR^9$. %The extremal rays of  $\mathcal{S}(\GGG)$ (and of  $\mathcal{D}_{2, 2}$) are $\gg_j=(0,\ldots, 1, \ldots,0)\in \RR^9, j=0,\ldots 8$.

\end{example}

\subsection{Given means: the class $\EEE(\GGG)(\pp)$}
Let $\XX\in \EEE(\GGG)$, $\GGG=\{G_1,\ldots, G_n\}$ and assume that  $E[X_i]=p_j$ if $i\in G_j$. Let $\pp=(p_1,\ldots, p_n)$ the mean vector. We consider here the class $ \EEE(\GGG)(\pp)$ of Bernoulli pmf with mean vector $\pp$. The map $F_{\GGG}$ induces a bijection between $\EEE(\GGG)(\pp)$ and $\mathcal{D}_{d_1,\ldots, d_n}(\dd\pp)=\mathcal{S}(\GGG)(\dd\pp)$, where $\dd\pp:=(d_1p_1,\ldots, d_np_n)$.

\begin{proposition}
\label{sd} Let $\boldsymbol{Y}\in  \mathcal{D}_{d_1,\ldots, d_n}$  and let $p_{\YY}$ be its pmf. Then
\begin{equation*}
\YY\in \mathcal{S}_d(\dd\pp)\,\,\, \Longleftrightarrow\,\,\,
\sum_{j_1=0}^{d_l}\cdots\sum_{j_n=0}^{d_n}(j_k-p_kd_k)p_Y(j_1,\ldots, j_n)=0,\,\,\, k=1,\ldots, n.
\end{equation*}
\end{proposition}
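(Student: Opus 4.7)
The plan is to unwind the definition of $\mathcal{S}(\GGG)(\dd\pp)$ in terms of expectations of the component sums $Y_k$, and then observe that the claimed identities are just a reformulation of these expectation constraints once we use the fact that $p_{\YY}$ is a probability mass function.

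First, I would recall that, by the remark following Theorem~\ref{Thm}, $\mathcal{S}(\GGG)$ coincides with $\mathcal{D}_{d_1,\ldots,d_n}$ via the identification $p_{\YY}(j_1,\ldots,j_n)=p_D(j_1,\ldots,j_n)$, and that $\mathcal{S}(\GGG)(\dd\pp)$ corresponds through the bijection $F_{\GGG}$ to $\EEE(\GGG)(\pp)$. Since for $\XX\in\EEE(\GGG)(\pp)$ we have $E[X_i]=p_k$ whenever $i\in G_k$, linearity of expectation together with $Y_k=\sum_{h\in G_k}X_h$ gives $E[Y_k]=d_k p_k$ for every $k=1,\ldots,n$. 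Conversely, if $\YY\in\mathcal{D}_{d_1,\ldots,d_n}$ satisfies $E[Y_k]=d_k p_k$ for every $k$, then applying $F_{\GGG}^{-1}$ produces a pmf in $\EEE(\GGG)$; using partial exchangeability within each block $G_k$, $E[X_i]$ is constant for $i\in G_k$, and the block-sum constraint forces this common value to be $p_k$, so $F_{\GGG}^{-1}(p_{\YY})\in\EEE(\GGG)(\pp)$. Hence $\YY\in\mathcal{S}(\GGG)(\dd\pp)$ if and only if $E[Y_k]=d_k p_k$ for every $k=1,\ldots,n$.

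Next I would translate this expectation equality into the stated summation identity. By definition,
\begin{equation*}
E[Y_k]=\sum_{j_1=0}^{d_1}\cdots\sum_{j_n=0}^{d_n} j_k\, p_{\YY}(j_1,\ldots,j_n),
\end{equation*}
and since $p_{\YY}$ is a pmf,
\begin{equation*}
d_k p_k=d_k p_k\sum_{j_1=0}^{d_1}\cdots\sum_{j_n=0}^{d_n} p_{\YY}(j_1,\ldots,j_n).
\end{equation*}
Subtracting, the equality $E[Y_k]=d_k p_k$ is equivalent to
\begin{equation*}
\sum_{j_1=0}^{d_1}\cdots\sum_{j_n=0}^{d_n}\bigl(j_k-d_k p_k\bigr)\,p_{\YY}(j_1,\ldots,j_n)=0,
\end{equation*}
which is exactly the claimed condition for the index $k$.

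There is no real obstacle here; the statement is essentially a reformulation of the mean constraints in marginal form. The only mildly delicate point is the converse direction, where one must use partial exchangeability inside each block $G_k$ to conclude that the condition $E[Y_k]=d_kp_k$ forces the individual Bernoulli means $E[X_i]$ for $i\in G_k$ to equal $p_k$ (rather than merely averaging to $p_k$); this is immediate from the invariance of $f$ under permutations fixing each block. Once this observation is in place, the proof reduces to the one-line algebraic manipulation above, applied separately for each $k=1,\ldots,n$.
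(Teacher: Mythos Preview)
Your proof is correct and follows essentially the same route as the paper: first identify membership in $\mathcal{S}(\GGG)(\dd\pp)$ with the mean constraints $E[Y_k]=d_kp_k$ for all $k$, then rewrite each constraint as $E[Y_k-d_kp_k]=0$ and expand the expectation as the stated sum. You spell out the converse direction (using partial exchangeability within each block to pin down the individual Bernoulli means) more carefully than the paper, which simply invokes Theorem~\ref{Thm} and the definition of $\mathcal{D}_{d_1,\ldots,d_n}(\dd\pp)$, but the substance of the argument is the same.
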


\begin{proof}
Let $\YY\in \mathcal{D}_{d_1,\ldots, d_n} $.  By Theorem \ref{Thm} $\YY\in \SSS(\GGG)(\dd\pp)$ iff $E[Y_k]=p_kd_k$.
%\begin{equation*}
%Y\in \SSS(p) \Longleftrightarrow E[Y]=pd.
%\end{equation*}
It holds
\begin{equation*}
E[Y_k]=p_kd_k \Longleftrightarrow E[Y_k-p_kd_k]=0 \Longleftrightarrow  \sum_{j_1=0}^{d_l}\cdots\sum_{j_n=0}^{d_n}(j_k-p_kd_k)p_Y(j_1,\ldots, j_n)=0.
\end{equation*}
\end{proof}
From this proposition and Proposition \ref{multinulli} it follows:
\begin{corollary}
The extremal points of the polytope  $\EEE(\GGG)(\pp)$ have support on at most $n+1$  points.
\end{corollary}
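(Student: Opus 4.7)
The plan is to reduce the statement to Proposition \ref{multinulli} by combining the bijection of Theorem \ref{Thm} with the linear characterization of Proposition \ref{sd}. First, $F_{\GGG}$ restricts to an affine bijection between $\EEE(\GGG)(\pp)$ and $\mathcal{D}_{d_1,\ldots,d_n}(\dd\pp)$ that acts coordinatewise by multiplication with the strictly positive constants $\binom{d_1}{j_1}\cdots\binom{d_n}{j_n}$. Consequently it maps extremal points to extremal points and preserves the cardinality of the support on the index set $J_1\times\cdots\times J_n$. Hence it is enough to bound the support size of extremal points of $\mathcal{D}_{d_1,\ldots,d_n}(\dd\pp)$.

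Next I would encode this latter polytope as a linear system. By Proposition \ref{sd}, a non-negative function $p_{\YY}$ on $J_1\times\cdots\times J_n$ belongs to $\mathcal{D}_{d_1,\ldots,d_n}(\dd\pp)$ iff $\sum p_{\YY}=1$ and the $n$ homogeneous equations
\begin{equation*}
\sum_{(j_1,\ldots,j_n)}(j_k-p_kd_k)\,p_{\YY}(j_1,\ldots,j_n)=0,\qquad k=1,\ldots,n,
\end{equation*}
hold. Arranging the coefficients of these $n$ equations as rows of an $n\times N$ matrix $A$, with $N=\prod_{k=1}^n(d_k+1)$, the polytope becomes the intersection of the cone $C=\{\zz\in\RR_+^N:A\zz=0\}$ with the normalization hyperplane $\sum z_i=1$. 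Its extremal points are precisely the renormalizations of the extremal rays of $C$ and therefore share the same support.

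To invoke Proposition \ref{multinulli} I must verify that $\rank(A)=n$. Suppose $\sum_{k=1}^n c_k(j_k-p_kd_k)$ vanishes identically as a function of $(j_1,\ldots,j_n)\in J_1\times\cdots\times J_n$. Varying $j_\ell$ alone while holding the other coordinates fixed forces $c_\ell=0$ for each $\ell$ (using $d_\ell\geq 1$), so the $n$ rows of $A$ are linearly independent. Proposition \ref{multinulli} then yields the bound $n+1$ on the number of non-zero components of each extremal ray of $C$, and by the two reductions above this transfers to the extremal points of $\EEE(\GGG)(\pp)$.

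The argument is essentially a translation to a linear system and I do not anticipate a serious obstacle. The only step worth isolating is the rank computation, which is immediate from the block structure of the constraints: row $k$ of $A$ depends only on the coordinate $j_k$, so the rows cannot cancel each other.
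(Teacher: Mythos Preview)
Your proposal is correct and follows the same route the paper indicates: the paper simply states that the corollary follows from Proposition~\ref{sd} and Proposition~\ref{multinulli}, and you have filled in the details of that reduction. Your explicit verification that $\rank(A)=n$ and that the diagonal bijection $F_{\GGG}$ preserves both extremality and support size are points the paper leaves implicit.
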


\begin{example}

Let $\XX\in \mathcal{E}_4(\GGG)$, where $\GGG=\{\{1,2\},\{3,4\}\}$ as in Example \ref{ex-pair} and let $\pp=(p_1, p_2)$ the mean vector. The convex polytope  $\EEE(\GGG)(\pp)$ is the set of solutions of the linear system:
\begin{equation*}
\left\{
\begin{array}{cc}
-2p_1(p_{00}+p_{01}+p_{02})+(1-2p_1)(p_{10}+p_{11}+p_{12})+(2-2p_1)(p_{20}+p_{21}+p_{22})=0\\
-2p_2(p_{00}+p_{10}+p_{20})+(1-2p_2)(p_{01}+p_{11}+p_{21})+(2-2p_2)(p_{02}+p_{12}+p_{22})=0
\end{array}
\right.,
\end{equation*}
therefore the extremal rays have support on at most three points. %If we consider tha case $p_1=p_2=\frac{1}{2}$ the extremal rays are given in Table...
As an example Table \ref{tabraype} provides the extremal rays for $\pp=(\frac{1}{2}, \frac{1}{4})$.

\begin{table}
\begin{footnotesize}
\begin{center}\caption{Ray density of  $\mathcal{S}(\GGG)(1, \frac{1}{2} )\equiv \EEE(\GGG)(\frac{1}{2}, \frac{1}{4})$}
\begin{tabular}{|r |r| r| r| r|r|r|r|r|r|r|r|r|r|r| } \label{tabraype}
$y$&$r_1(y)$ & 	$r_2(y)$ &	$r_3(y)$ & $r_4(y)$ &$r_5(y)$ & $r_6(y)$&$r_7(y)$& $r_8(y)$&$r_9(y)$&$r_{10}(y)$ & $r_{11}(y)$ & $r_{12}(y)$ & $r_{13}(y)$ & $r_{14}(y)$\\
\hline
00&0 &	0 &	0 &	0 &	0 &	0 &	0 &	0 &	0.5 &	0.25 &	0.25 &	0.25 &	0.5 &	0.375 \\
10&0 &	0.5 &	0.5 &	0.5 &	0.75 &	0.667 &	0.667 &	0.75 &	0 &	0 &	0 &	0.5 &	0 &	0 \\
20&0.5 &	0 &	0 &	0.25 &	0 &	0 &	0 &	0 &	0 &	0.25 &	0.5 &	0 &	0.25 &	0.375 \\
01&0.5 &	0 &	0.25 &	0 &	0 &	0 &	0.167 &	0 &	0 &	0 &	0 &	0 &	0 &	0 \\
11&0 &	0.5 &	0 &	0 &	0 &	0 &	0 &	0 &	0 &	0.5 &	0 &	0 &	0 &	0 \\
21&0 &	0 &	0.25 &	0 &	0 &	0.167 &	0 &	0 &	0.5 &	0 &	0 &	0 &	0 &	0 \\
02&0 &	0 &	0 &	0.25 &	0 &	0.167 &	0 &	0.125 &	0 &	0 &	0.25 &	0 &	0 &	0 \\
12&0 &	0 &	0 &	0 &	0.25 &	0 &	0 &	0 &	0 &	0 &	0 &	0 &	0 &	0.25 \\
22&0 &	0 &	0 &	0 &	0 &	0 &	0.167 &	0.125 &	0 &	0 &	0 &	0.25 &	0.25 &	0 \\
\end{tabular}
\end{center}
\end{footnotesize}
\end{table}

\end{example}
%\section{Conclusions\label{fine}}

\section*{Acknowledgements}
The authors gratefully acknowledge financial support from the Italian Ministery of Education, University and Research, MIUR, "Dipartimenti di Eccellenza"
grant 2018-2022.

\bibliographystyle{ieeetr}
\bibliography{biblioRF}

\end{document}